\titleformat{\subsection}[runin]{\normalfont\bfseries}{\thesubsection}{1em}{}
\DeclareSymbolFont{cyrletters}{OT2}{wncyr}{m}{n}
\DeclareMathSymbol{\Sha}{\mathalpha}{cyrletters}{"58}
\newcommand{\mytodo}[2][]{{%
 \let\marginpar\marginnote
 \reversemarginpar
 \renewcommand{\baselinestretch}{0.8}%
 \todo[#1]{#2}}}
\theoremstyle{plain}
\newtheorem{theorem}[equation]{Theorem}
\newtheorem{corollary}[equation]{Corollary}
\newtheorem{lemma}[equation]{Lemma}
\theoremstyle{definition}
\newtheorem{definition}[equation]{Definition}
\newtheorem{remark}[equation]{Remark}
\newtheorem{example}[equation]{Example}
\newtheorem{set-up}[equation]{Set-up}
\newcommand{\IC}{\mathbb{C}}
\newcommand{\ID}{\mathbb{D}}
\newcommand{\IF}{\mathbb{F}}
\newcommand{\IG}{\mathbb{G}}
\newcommand{\IQ}{\mathbb{Q}}
\newcommand{\IR}{\mathbb{R}}
\newcommand{\IZ}{\mathbb{Z}}
\newcommand{\End}{\mathrm{End}}
\newcommand{\tr}{\mathrm{tr}}
\newcommand{\Hom}{\mathrm{Hom}}
\newcommand{\Res}{\mathrm{Res}\,}
\newcommand{\ch}{\rm ch} 
\newcommand\iso{\,{\cong}\,} 
\newcommand\tensor{{\otimes}}
\newcommand{\<}{\langle}
\renewcommand{\>}{\rangle}
\newcommand{\into}{\hookrightarrow}
\def\d/{/\mspace{-6.0mu}/}
\def\wt{\widetilde}
\def\what{\widehat}
\newcommand{\w}{\omega}
\newcommand{\Pic}{\mathrm{Pic}\,}
\newcommand{\ord}{\mathrm{ord}}
\newcommand{\Cl}{\mathrm{Cl}}
\newcommand{\Gal}{\mathrm{Gal}}
\newcommand{\NS}{\mathrm{NS}}
\newcommand{\et}{\mathrm{{\acute{e}}t}}
\newcommand{\shA}{\mathscr{A}}
\newcommand{\CSpin}{\mathrm{CSpin}}
\newcommand{\SO}{\mathrm{SO}}
\newcommand{\GSp}{\mathrm{GSp}}
\newcommand{\GL}{\mathrm{GL}}
\newcommand{\Br}{\mathrm{Br}}
\newcommand{\cris}{\mathrm{cris}}
\newcommand{\bpi}{\mathbf{\pi}}
\newcommand{\dR}{\mathrm{dR}}
\newcommand{\Fil}{\mathrm{Fil}}
\newcommand{\ft}{\mathfrak{t}}
\newcommand{\shX}{\mathscr{X}}
\newcommand{\der}{\mathrm{der}}
\newcommand{\shE}{\mathscr{E}}
\newcommand{\Ch}{\mathrm{Ch}}
\newcommand{\LEnd}{\mathrm{LEnd}}
\newcommand{\bL}{\mathbf{L}}
\renewcommand{\O}{\mathrm{O}}
\renewcommand{\hom}{\mathrm{hom}}
\renewcommand{\bpi}{\boldsymbol{\pi}}
\renewcommand{\H}{\mathrm{H}}
\newcommand{\sto}{\stackrel{\sim}{\to}}
\newcommand{\fh}{\mathfrak{h}}
\newcommand{\sfK}{\mathsf{K}}
\newcommand{\shM}{\mathscr{M}}
\newcommand{\cS}{\mathcal{S}}
\newcommand{\PH}{\mathrm{P}}
\newcommand{\fa}{\mathfrak{a}}
\newcommand{\IKC}{\mathsf{IK3}}
\renewcommand{\P}{\mathrm{P}}
\renewcommand{\U}{\mathrm{U}}
\title{{\large{\textbf{The Tate Conjecture for Motivic Endomorphisms of K3 Surfaces over Finite Fields}}}
\vspace{-1ex}
\author{\normalsize{Ziquan Yang}}
\date{\vspace{-5ex}}}
\begin{document}

\maketitle

\begin{abstract}
    The Tate conjecture for squares of K3 surfaces over finite fields was recently proved by Ito--Ito--Koshikawa. We give a more geometric proof when the characteristic is at least $5$. The main idea is to use twisted derived equivalences between K3 surfaces to link the Tate conjecture to finiteness results over finite fields, in the spirit of Tate. 
\end{abstract}

\section{Introduction}

In this note, we give a proof of the Tate conjecture for squares of K3 surfaces over finite fields. We will phrase and study the problem from the perspective of motives, so we first introduce some notation. Let $k$ be a perfect field of characteristic $p$ with a chosen algebraic closure $\bar{k}$, $\shM_{\mathrm{Chow}}(k)$ be the category of Chow motives, which comes equipped with fiber functors $\w_\ell$ (resp. $\w_\cris$) given by $\ell$-adic cohomology for every prime $\ell \neq p$ (resp. crystalline cohomology with $W[1/p]$ coefficients). Let $\shM_{\hom}(k)$ be the category $\shM_{\mathrm{Chow}}(k)$ but with $\Hom$ groups taken modulo the kernel of $\w := \w_\ell \times \w_\cris$. Let $\fh(-)$ denote the usual functor from the category of varieties over $k$ to $\shM_\hom(k)$ given by $X \mapsto (X, \mathrm{id}, 0)$.  

\begin{theorem}
\label{thm: main}
Let $X$ be a K3 surface over a finite field $\IF_q$ of charactersitic $p \ge 5$. Then natural morphism 
$$ \w_\ell : \End(\fh(X)) \tensor \IQ_\ell \to \End_F (\H^*_\et(X_{\bar{\IF}_q}, \IQ_\ell)) $$
is an isomorphism for every prime $\ell \neq p$, where $F$ denotes the Frobenius action on $\H^*_\et(X_{\bar{\IF}_q}, \IQ_\ell)$. 
\end{theorem}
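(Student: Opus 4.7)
Following Tate's original strategy for abelian varieties---finiteness of isogeny classes producing the needed algebraic cycles---I would use twisted Fourier--Mukai equivalences of K3 surfaces in place of isogenies, and a finiteness statement for twisted K3 surfaces over $\IF_q$ in place of Honda--Tate finiteness.

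First I reduce to a ``transcendental'' piece of $H^2$. Since Frobenius acts with weights $0, 2, 4$ on $H^0, H^2, H^4$, any $F$-equivariant endomorphism of $\H^*_\et(X_{\bar{\IF}_q}, \IQ_\ell)$ is Künneth block-diagonal; the $H^0$ and $H^4$ blocks are motivic via the identity and the rational-point correspondence, and injectivity of $\w_\ell$ is built into the definition of $\shM_\hom$. Using the Tate conjecture for $X$ itself (available in characteristic $p \ge 5$ by Maulik, Charles, and Madapusi Pera), the divisorial part of $\End_F(H^2)$ coming from $\NS(X_{\bar{\IF}_q}) \tensor \NS(X_{\bar{\IF}_q})$ is already spanned by divisor-cross-divisor correspondences, so it remains to realize the transcendental endomorphisms of $H^2(X_{\bar{\IF}_q}, \IQ_\ell)(1)$ by motivic self-correspondences of $X$.

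Next, via Poincaré duality and Mukai's formalism, such a $\phi$ corresponds to numerical data $v = (r, \phi, s)$ on the Mukai lattice. The construction is: choose a Brauer class $\alpha \in \Br(X)$ and form the moduli space $M_{X, \alpha}(v)$ of $\alpha$-twisted sheaves on $X$ with Mukai vector $v$; arrange $v$ so that $M_{X, \alpha}(v)$ is itself a twisted K3 surface $(Y, \beta)$ over $\IF_q$ carrying a universal $\alpha^{-1} \boxtimes \beta$-twisted sheaf $\sP$, inducing a twisted Fourier--Mukai equivalence $\Phi_\sP \colon \D^b(X, \alpha) \sto \D^b(Y, \beta)$; finally, composing $\Phi_\sP$ with the inverse kernel yields an algebraic self-correspondence of $X$ whose $\ell$-adic realization is $\phi$. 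The existence of such a partner $(Y, \beta)$ with the correct $F$-equivariant Mukai data is ensured by the Tate-style finiteness input: only finitely many twisted K3 surfaces over $\IF_q$ admit a twisted FM-equivalence with some $(X, \alpha)$, and every $F$-equivariant Mukai isometry arises from one such pair. Via Kuga--Satake this translates into a Chebotarev/Hecke-orbit statement on the orthogonal Shimura variety $\shS(L_d)$, in the same spirit as Tate's original argument.

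The main obstacle is the middle step: arranging that the moduli spaces $M_{X, \alpha}(v)$ behave well over $\IF_q$ (smoothness, projectivity, $\IF_q$-descent of the universal twisted sheaf, control of the Brauer class $\beta$). This rests on the theory of moduli of twisted sheaves on K3 surfaces in positive characteristic (Lieblich, Bragg--Lieblich) and on smooth integral canonical models for the spin Shimura variety attached to the K3 lattice (Madapusi Pera); both ingredients require $p \ge 5$, which is the source of the characteristic hypothesis.
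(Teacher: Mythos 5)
Your overall frame (twisted Fourier--Mukai equivalences as the substitute for Tate's isogenies, plus a finiteness input over $\IF_q$) matches the paper's starting point, but the heart of the argument is missing. The step ``every $F$-equivariant Mukai isometry arises from one such pair \dots\ composing $\Phi_\sP$ with the inverse kernel yields an algebraic self-correspondence whose $\ell$-adic realization is $\phi$'' does not work as stated: a twisted FM kernel only realizes an \emph{isometry} of the transcendental part (an orthogonal element commuting with $F$; the existence theorem produces partners realizing prescribed $\IZ_\ell$-lattices, not prescribed endomorphisms), whereas the target $\End_F(T_\ell(X))$ is a full algebra, so no single such correspondence has realization an arbitrary $\phi$. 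One therefore needs a mechanism converting ``the group $S^{\der,\ell}_\ft$ of derived transcendental $\ell$-isogenies is large'' into ``its $\IQ$-span is all of $\End_F(T_\ell(X))$'', and this is where all the work lies. In the paper, finiteness of K3 surfaces over $\IF_q$ (combined with the lattice-realization existence theorem, Galois descent, and the twisted Torelli theorem) is converted into \emph{compactness} of $S^{\der,\ell}_\ft \backslash I_{\ft,\ell}(\IQ_\ell)$ (Thm~\ref{thm: fin}); then Borel--Tits makes any compatible $\IQ$-group parabolic in the Frobenius centralizer after base change to $\IQ_\ell$, and \emph{reductivity} of that $\IQ$-group forces it to surject onto $I_{\ft,\ell}$ and its rational points to span $\End_F(T_\ell(X))$ (Thm~\ref{thm: main lemma}, Cor.~\ref{cor: conditional}). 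Reductivity is not free: it would follow from the standard conjectures (positivity on algebraic classes), and the unconditional proof instead builds $I' = \wt{I}/\IG_m$ from automorphisms of the Kuga--Satake abelian variety preserving the CSpin-structure, getting reductivity from the positivity of the Rosati involution, with Lemmas~\ref{lem: lift to CSpin-isogeny} and~\ref{lem: reduce to K3 isogeny} supplying the maps $S_0^{\der,\ell} \to I'(\IQ) \to \End(\ft)$ by lifting--reduction. Your proposal invokes Kuga--Satake only for a vague ``Chebotarev/Hecke-orbit statement,'' which is not the role it needs to play and does not substitute for this step.

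A secondary gap: injectivity of $\w_\ell$ after $\tensor\,\IQ_\ell$ is not ``built into the definition of $\shM_\hom$''; the definition gives injectivity only over $\IQ$, and one must rule out a $\IQ$-subspace of endomorphisms whose dimension drops after extension of scalars. The paper handles this via the Poincar\'e-pairing argument of Lem.~\ref{lem: pairing repellent} together with Lem.~\ref{lem: faithfully flat}, once surjectivity is known. Also note that the finiteness actually used is finiteness of isomorphism classes of honest K3 surfaces over $\IF_q$, fed through descent and Torelli, rather than a finiteness statement about twisted partners.
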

When $X$ is a K3 surface over $k$, $\fh(X)$ is known to admit a Chow-K\"unneth decomposition $\fh(X) = \fh^0(X) \oplus \fh^2(X) \oplus \fh^4(X)$. For two K3 surfaces $X$ and $X'$ over $k$, an \textbf{isogeny} from $X'$ to $X$ is an isomorphism $f : \fh^2(X) \sto \fh^2(X')$ such that $\w(f)$ preserves the Poincar\'e pairing.  We consider the \textbf{isogeny category of K3 surfaces} $\IKC(k)$, whose objects are K3 surfaces over $k$ and whose morphisms are isogenies. For a give K3 surface $X$, we define the \textbf{automorphism group functor of $X$ in the isogeny category} by 
\begin{align*}
    I_X(R) := \{ f\in (\End(\fh^2(X)) \tensor R)^\times : \w(f) \text{ preserves the Poincar\'e pairing} \}
\end{align*}
for every $\IQ$-algebra $R$. Then we have a group-theoretic version of Thm~\ref{thm: main}: 
\begin{theorem} 
\label{thm: group main}
Let $X$ be as in Thm~\ref{thm: main}. Assume that $\IF_q$ is big enough so that all line bundles on $X_{\bar{\IF}_q}$ are defined over $\IF_q$ and $F \in \SO(\H^2_\et(X_{\bar{\IF}_q}, \IQ_\ell(1)))$.  Let $I_{X, \ell}$ denote the centralizer of the geometric Frobenius action in $\SO(\H^2_\et(X_{\bar{\IF}_q}, \IQ_\ell(1)))$. Then $I_X$ is representable by a reductive group of finite type over $\IQ$ and $\w_\ell$ induces an isomorphism $I_X \tensor \IQ_\ell \iso I_{X, \ell}$.
\end{theorem}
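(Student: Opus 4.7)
The plan is to deduce Theorem \ref{thm: group main} directly from Theorem \ref{thm: main} by a bookkeeping argument on the Chow--K\"unneth decomposition. First, because $\omega_\ell$ sends $\fh^0(X), \fh^2(X), \fh^4(X)$ to subspaces of $\H^*_\et(X_{\bar{\IF}_q}, \IQ_\ell)$ with distinct Frobenius weights, every $F$-equivariant endomorphism preserves this splitting; combined with $\End(\fh^0(X)) = \End(\fh^4(X)) = \IQ$, Theorem \ref{thm: main} refines to an isomorphism
\[
\omega_\ell : \End(\fh^2(X)) \otimes \IQ_\ell \xrightarrow{\sim} \End_F\bigl(\H^2_\et(X_{\bar{\IF}_q}, \IQ_\ell)\bigr),
\]
so in particular $A := \End(\fh^2(X))$ is a finite-dimensional $\IQ$-algebra.

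With $A$ finite-dimensional, representability of $I_X$ is immediate: the unit group $A^\times$ is an open subscheme of the affine $\IQ$-space underlying $A$, and the pairing-preservation condition on $\omega(f)$ cuts out a closed subscheme by a finite list of polynomial equations (it suffices to test on a single realization since $\omega$ is by construction faithful on $\shM_\hom$). Hence $I_X$ is an affine group scheme of finite type over $\IQ$.

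To identify $I_X \tensor \IQ_\ell$ with $I_{X,\ell}$, we base-change the displayed isomorphism: for any $\IQ_\ell$-algebra $R$, $I_X(R)$ matches the group of $F$-equivariant units of $\End(\H^2_\et) \tensor R$ preserving the Poincar\'e pairing, i.e., the centralizer of $F$ in the full orthogonal group $\O\bigl(\H^2_\et(X_{\bar{\IF}_q}, \IQ_\ell(1))\bigr)$. Refining this to the centralizer inside $\SO$ amounts to showing $\det \omega_\ell(f) = 1$ for every $f \in I_X$; the natural strategy is to exploit the splitting $\H^2_\et(1) = (\NS(X_{\bar{\IF}_q}) \tensor \IQ_\ell) \oplus T_\ell$ available under the standing hypotheses, since $F$ acts trivially on the algebraic part (all line bundles are defined over $\IF_q$) and, by the Tate conjecture for $X$, has no eigenvalue $1$ on $T_\ell$. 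This eigenvalue structure lets one control the determinant of any centralizing element summand by summand.

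Finally, reductivity is obtained by descent from $\IQ_\ell$. Frobenius acts semisimply on $\H^2$ of a K3 surface in characteristic $p \geq 5$ (available via the Kuga--Satake construction), so $I_{X,\ell}$ is the centralizer of a semisimple element in the reductive group $\SO$ and hence reductive; reductivity then descends along $\IQ \to \IQ_\ell$ to give the same for $I_X$. The principal obstacle in this plan is the $\O$-versus-$\SO$ refinement in the previous paragraph, where one must combine the global hypotheses with the detailed Frobenius eigenvalue structure; the remaining steps are largely formal consequences of Theorem \ref{thm: main}.
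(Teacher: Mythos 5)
Your route---deducing Thm~\ref{thm: group main} formally from Thm~\ref{thm: main} by base change---is genuinely different from the paper's, where the two theorems are proved simultaneously via Cor.~\ref{cor: conditional} (compactness of $S^{\der,\ell}_\ft\backslash I_{\ft,\ell}(\IQ_\ell)$, the Kuga--Satake group $I'$, and Thm~\ref{thm: main lemma}), with the group-theoretic statement extracted at the level of the transcendental motive ($I_\ft\tensor\IQ_\ell\iso I_{\ft,\ell}$) using Lemmas~\ref{lem: faithfully flat} and~\ref{lem: pairing repellent}. Your reduction is not circular, and several steps are fine: the weight/eigenvalue argument does refine Thm~\ref{thm: main} to an isomorphism $\End(\fh^2(X))\tensor\IQ_\ell \iso \End_F(\H^2_\et(X_{\bar{\IF}_q},\IQ_\ell))$, finite-dimensionality of $\End(\fh^2(X))$ then gives representability by an affine group of finite type, and once $I_X\tensor\IQ_\ell$ is identified with a centralizer of the semisimple element $F$ in an orthogonal group, reductivity of $I_X$ follows by descent in characteristic zero.

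The genuine gap is exactly the step you flag as the principal obstacle: the claim that $\det\w_\ell(f)=1$ for every $f\in I_X$ is false, and no eigenvalue analysis can rescue it. The reflection in the class of a polarization (or in any class of $\NS(X)$; by hypothesis all line bundles on $X_{\bar{\IF}_q}$ are defined over $\IF_q$) is an algebraic correspondence, hence lies in $I_X(\IQ)$: it preserves the Poincar\'e pairing and commutes with $F$ because $F$ fixes $\NS(X_{\bar{\IF}_q})\tensor\IQ_\ell$, yet it has determinant $-1$ on $\H^2$. What your base-change argument actually identifies is $I_X\tensor\IQ_\ell$ with the centralizer of $F$ in the full orthogonal group, which decomposes as $\O(\NS(X)\tensor\IQ_\ell)\times Z_{\SO(T_\ell(X))}(F)$: your ``no eigenvalue $1$ (or $-1$) on $T_\ell$'' argument does give determinant $1$ on the transcendental summand, but on the algebraic summand $F$ acts trivially, the centralizer is the full orthogonal group, and the index-two discrepancy with $I_{X,\ell}=\SO(\NS(X)\tensor\IQ_\ell)\times I_{\ft,\ell}$ is irremovable. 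To complete the argument you must either build a determinant (equivalently, an orientation on the algebraic part) condition into the group you compare with $I_{X,\ell}$, or do as the paper does and prove the comparison for the transcendental part $I_\ft$ versus $I_{\ft,\ell}$, treating the algebraic summand separately; as written, the $\O$-versus-$\SO$ step fails.
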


\begin{theorem}
\label{thm: existence}
Let $X$ be a K3 surface over $\IF_q$. Let $F$ denote the geometric Frobenius action on $\H^2_\ell(X)$ or $\H^2_\et(X)_\IQ$. For any isometric embedding $\iota : \Lambda_\ell \to \H^2_\ell(X)$, such that $\iota(\Lambda_\ell)$ is $F$-invariant, there exists another K3 surface $X'$ over $\IF_q$, which is unique up to isomorphism and comes with a derived $\ell$-isogeny $f : \fh^2(X') \to \fh^2(X)$ such that $f(\H^2_\ell(X') = \iota(\Lambda_\ell)$. 

Let $X, X'$ be two K3 surfaces over $\IF_q$. Suppose there exists a derived $\ell$-isogeny $f : \fh^2(X') \sto \fh^2(X)$ such that $f(\H^2_\ell(X')) = \H^2_\ell(X)$, then $X \cong X'$. 
\end{theorem}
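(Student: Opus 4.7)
The plan is to reduce both parts to statements about points of the integral canonical model of the orthogonal Shimura variety $\shS(L_d)$ attached to the K3 lattice, using the Kuga--Satake period morphism as the bridge. All the ingredients---existence and uniqueness of K3 surfaces attached to prescribed Kuga--Satake data---are available in characteristic $p \ge 5$ through the work of Madapusi Pera and Kim--Madapusi Pera on integral models and the characteristic $p$ Torelli theorem.

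For the existence claim, I would interpret the isometric $F$-stable embedding $\iota : \Lambda_\ell \to \H^2_\ell(X)$ as a prescribed modification of the $\ell$-adic integral structure on the Mukai (Kuga--Satake) realization of $X$, with the integral structures at primes $\ell' \neq \ell$ and the crystalline structure at $p$ left unchanged. This prescribes a new $F$-stable lattice in the Mukai realization and hence a second $\overline{\IF}_q$-point $x'$ of $\shS(L_d)$ lying in the same prime-to-$p$ isogeny class as the point determined by $X$. Existence of such $x'$ follows from the group-theoretic description of $\mathrm{mod}\, p$ isogeny classes on Shimura varieties of Hodge type. The surjectivity of the K3 period morphism onto its natural image then produces a K3 surface $X' / \IF_q$ mapping to $x'$, and the derived $\ell$-isogeny $f : \fh^2(X') \to \fh^2(X)$ is supplied by the tautological identification of the Mukai realizations at $x$ and $x'$.

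For the rigidity claim, a derived $\ell$-isogeny $f$ satisfying $f(\H^2_\ell(X')) = \H^2_\ell(X)$ encodes an isomorphism of integral structures at $\ell$; this combines with the compatibility data at $\ell' \neq \ell$ and at $p$ built into the notion of a derived isogeny to yield an isomorphism of the full integral Mukai lattices of $X$ and $X'$, respecting Hodge/crystalline/Galois structures. The injectivity of the period morphism on the coarse moduli of polarized K3 surfaces (the characteristic $p$ Torelli theorem) then forces $X \cong X'$.

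The main obstacle will be controlling the Brauer/gerbe data inherent in the notion of a derived $\ell$-isogeny, so that the moduli-theoretic lift $X'$ produced from the Shimura variety is genuinely an untwisted K3 surface whose Mukai lattice matches the prescribed $\iota$ on the nose. This is most naturally handled by combining the refined integral Kuga--Satake period morphism with the Lieblich--Olsson and Bragg--Lieblich theory of derived equivalences of twisted K3 surfaces in positive characteristic; the $F$-stability of $\iota(\Lambda_\ell)$ is the decisive ingredient that forces the relevant twisting class to vanish so that the construction descends to an untwisted K3 surface over $\IF_q$.
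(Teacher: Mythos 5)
The central gap is in your existence step. You produce a second point $x'$ of $\cS(L_d)$ lying in the prime-to-$p$ isogeny class of the Kuga--Satake point attached to $X$, and then invoke ``surjectivity of the K3 period morphism onto its natural image'' to get $X'$. But the period morphism $\wt{\shM}^\circ_{2d} \to \cS(L_d)$ is only known to be \'etale; its image is not known to be stable under such $\ell$-adic lattice modifications (Hecke translates), and showing that a translate of a K3 point is again a K3 point is precisely the hard geometric content of the theorem, not something you may assume. The paper sidesteps the Shimura variety entirely at this stage: after reducing, via Cartan--Dieudonn\'e, to the case where $\iota(\Lambda_\ell)$ is obtained by a single reflection $s_b$, it takes $\alpha$ to be the Brauer class of the B-field $b/n$ and realizes $X'$ concretely as the moduli space of stable $\alpha$-twisted sheaves on $X$ with Mukai vector $(n,0,0)$ (Thm~\ref{thm: existence} in \S2, quoting [BY3]). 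This construction is also what supplies the derived $\ell$-isogeny: in your setup the ``tautological identification of the Mukai realizations'' at $x$ and $x'$ is merely an isometry of cohomology, and nothing guarantees it is induced by (a composition of) twisted Fourier--Mukai kernels with $\ell$-power Brauer classes, which is what a derived $\ell$-isogeny means here.

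Two further problems. First, rationality over $\IF_q$: in the paper the $F$-stability of $\iota(\Lambda_\ell)$ is used to transport the descent datum of $X$ to one for $X'$, namely $\varphi'_\sigma = f^{-1}\circ\varphi_\sigma\circ f^\sigma$, and each $\varphi'_\sigma$ is recognized as an isomorphism by the Torelli theorem for derived isogenies (Thm~\ref{thm: Gal descent} via Thm~\ref{thm: Torelli}). Your assertion that $F$-stability ``forces the relevant twisting class to vanish'' misidentifies its role: the Brauer class $\alpha$ on $X$ entering the construction is typically nonzero, and $X'$ is an honest K3 surface regardless. Second, uniqueness: injectivity of the period morphism for polarized K3s does not apply directly, because an integral derived $\ell$-isogeny $\fh^2(X') \sto \fh^2(X)$ need not respect polarizations or ample cones, so the two points you would want to compare on the moduli space are not visibly equal. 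The paper's route is the Torelli theorem of [BY3] for \emph{polarizable} derived isogenies, after first correcting $f$ by $\pm 1$ and reflections in $(-2)$-curves so that it matches ample cones; without that correction, $X \cong X'$ does not follow from any period-map injectivity statement.
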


We now explain the main ideas of proof. First, let $X$ and $\IF_q$ be as in the above theorem and $\ft$ be the transcendental part of $\fh(X)$, i.e., the orthogonal complement to the span of submotives. Similarly, we consider $I_{\ft(X)}$, which is defined just as $I_X$ but with $\fh^2(X)$ replaced by $\ft$ and the centralizer $I_{\ft, \ell}$ of the geometric Frobenius in $\SO(\w_\ell(\ft(1)))$. It suffices to show that $\w_\ell : \End(\ft(1)) \tensor \IQ_\ell \to \End_F (\w_\ell(\ft(1)))$ and $\w_\ell : I_{\ft} \tensor \IQ_\ell \to I_{\ft, \ell}$ are isomorphisms. 

Next, we geometrically construct lots of elements in $\End(\ft)$ using \textit{twisted derived equivalences}. A twisted K3 surface is a pair $(X, \alpha)$ where $X$ is a K3 surface and $\alpha$ is a Brauer class. The order of $(X, \alpha)$ is the order of $\alpha$. Up to choices of B-fields, a derived equivalence $\ID(X, \alpha) \iso \ID(X', \alpha')$ between two twisted K3 surfaces of $\ell^\infty$-order gives rise to an isomorphism $f: \ft \sto \ft'$, where $\ft'$ is the transcendental part of $\fh(X')$. We call compositions of such isomorphisms \textit{derived transcendental $\ell$-isogenies}. Let $S_\ft^{\der, \ell} \subset \End(\ft)$ denote the subgroup of derived transcendental $\ell$-isogenies from $X$ to itself. After developing the structural theorems for derived transcendental $\ell$-isogenies, we use the key fact that there are only finitely many K3 surfaces up to isomorphism over $\IF_q$ to show the quotient $S_\ft^{\der, \ell} \backslash I_{\ft(X), \ell}$ is \textit{compact} (Thm~\ref{thm: fin}). 

Note that this compactness statement already implies that $S_\ft^{\der, \ell}$ is big. Our next step is to find a reductive $\IQ$-group $I'$ which is equipped with a morphism $\nu : I' \to I_{\ft(X), \ell}$ and maps $S_\ft^{\der, \ell} \to I'(\IQ) \to \End(\ft)$ compatible with $\nu$ in the obvious sense. The compactness result and a theorem of Borel and Tis implies that $v(I'_{\IQ_\ell}) \subseteq I_{\ft, \ell}$ is a parabolic subgroup (see Thm~\ref{thm: main lemma}). However, $v(I'_{\IQ_\ell})$ is reductive and $I_{\ft, \ell}$ is connected, so $v(I'_{\IQ_\ell}) = I_{\ft, \ell}$. Then using the density of $I'(\IQ)$ in $I'(\bar{\IQ}_\ell)$ we check that the image of $I'(\IQ)$ in $\End(\ft)$ spans $\End_F \w_\ell(\ft)$. This implies both Thm~\ref{thm: main} and~\ref{thm: group main}. 

It remains to explain how to construct $I'$. Ideally, if $X \times X$ satisfies some of the standard conjectures, then we can directly verify that $I_{\ft}$ defines a reductive $\IQ$-group and simply take $I' = I_{\ft}$. In fact, a little refinement of the argument shows that $\End_F \w_\ell(\ft)$ is spanned by $S^{\der, \ell}_\ft$. We will carry out this strategy out for K3 surfaces which arise from the Kummer construction (Ex.~\ref{exp: Kummer}) and a slight variant of this strategy to ordinary K3 surfaces (Ex.~\ref{exp: ordinary}). To treat the general case, we choose a polarization $\xi$ on $X$ and consider the \textit{Kuga-Satake} abelian variety $A$ attached to $(X, \xi)$. The abelian variety $A$ comes with a ``CSpin-structure''. We consider the group scheme $\wt{I}$ which parametrizes the automorphisms of $A$ which preserve this CSpin-structure and define $I' := \wt{I}/\IG_m$. The main point of considering $\wt{I}$ is that it fixes a polarization on $A$ up to scalar, so that $I'$ is reductive by the \textit{positivity of Rosati involution}. The general mechinery of the Kuga-Satake construction gives us a morphism $I' \to I_{\ft, \ell}$ and we construct maps $S_\ft^{\der, \ell} \to I'(\IQ) \to \End(\ft)$ by lifting-reduction arguments.

\begin{remark}
We remark that Thm~\ref{thm: main} (including the case $p = 2, 3$) was previously proved in \cite{IIK} by Ito--Ito--Koshikawa, whose method relies more heavily on the Kuga-Satake construction. The main idea there is to develop a refined CM lifting theory for K3 surfaces over finite fields, and then deduce Thm~\ref{thm: main} using Kisin's version of Tate's theorem on the endomorphisms of abelian varieties, and the fact that squares of K3 surfaces with CM satisfies the Hodge conjecture, proved by Buskin. In comparison, our method relies more on the geometry of K3 surfaces themselves and Kuga-Satake abelian varieties are only used to bypass the standard conjectures. In particular, we do not make use of any CM liftings or Tate's theorem\footnote{We do need to use the semisimplicity of Frobenius and the divisorial Tate conjecture for individual K3's, whose known proofs indeed depend on Tate's theorem via the Kuga-Satake construction. We will treat these inputs as given.}. There is a long standing connection between finiteness results and the divisorial Tate conjecture, as shown in the work of Tate \cite{Tate} for abelian varieties, and works of Artin--Swinnerton-Dyer \cite{ASD}, Lieblich--Maulik--Snowden \cite{LMS}, and Charles \cite{Charles2} for K3 surfaces. Our main purpose of giving a new proof is to provide an example of such a connection for codimension $2$ cycles, using theory of gerbes and twisted sheaves. 
\end{remark}


\section{Derived $\ell$-Isogenies for K3 Surfaces}

Let $k$ be an algebraically closed field. Let $Y$ be a K3 surface or a product of two K3 surface over $k$ and let $\alpha$ be a Brauer class in $\Br(Y)[\ell^\infty]$. The $\alpha$-twisted coherent sheaves form an abelian category, so we can talk about its bounded derived category $\ID(Y, \alpha)$. To take Chern characters of $\alpha$-twisted sheaves, we need the notion of B-field lifts: 
\begin{definition}
A $B$-field lift of $\alpha$ is an element in $\H^2_\et(Y, \IQ_\ell(1))$ with the following property: Suppose $\ord (\alpha) = \ell^m$ and $B = \beta / \ell^m$. Then $\beta$ lies in $\H^2_\et(Y, \IZ_\ell(1))$ and maps to $\alpha$ under the composition $\H^2_\et(Y, \IZ_\ell(1)) \to \H^2_\et(Y, \mu_{\ell^m}) \to \Br(Y)$, where the first map is the reduction map modulo $\ell^m$. 
\end{definition}
Given a complex of $\alpha$-twisted sheaf $\shE$ on $Y$, and a B-field lift $B$ of $\alpha$ as above, we can define its \textbf{twisted Chern character} $\ch^B(\shE)$, which depends on the choice of $B$. The basic idea is that, given a choice of a gerbe $\shX$ representing $\beta$, $\shE^{\tensor \ell^m}$ becomes an untwisted sheaf on $X$, so we can take the usual Chern character $\ch(\shE^{\tensor \ell^m})$ and define $\ch^B(\shE)$ as $\sqrt[\ell^m]{\ch(\shE^{\tensor \ell^m})}$, which depends only on $B$ and not on $\shX$. For details, see \cite[App.~A]{Bragg-Derived-Equiv} and \cite[\S2]{BY3}. The \textbf{twisted Mukai vector} of $\shE$ is defined by $v^B(\shE) := \ch^B(\shE) \sqrt{\mathrm{td}(Y)}$. 

Now suppose we are given given two twisted K3 surfaces $(X, \alpha), (X', \alpha')$ and a derived equialence $\Phi : \ID(X, \alpha) \sto \ID(X', \alpha')$. By Orlov's theorem, $\Phi$ is induced by a complex $\shE \in D(X \times X', \alpha^{-1} \boxtimes \alpha')$, which is unique up to quasi-isomorphism. To emphasize this dependence we denote $\Phi$ by $\Phi_\shE$. Given B-field lifts $B, B'$ of $\alpha, \alpha'$, we have a twisted Mukai vector $v^{-B \boxtimes B'}(\shE)$. As an element of $\Ch^*(X \times X')_\IQ$, it induces a map $\Phi^{-B \boxtimes B'}(\shE) : \H^*(X)_\IQ \sto \H^*(X')_\IQ$. 

\begin{theorem}
\label{thm: ell prime integrality}
The map $\Phi^{-B \boxtimes B'}(\shE)$ as above restricts to a $\IZ_{\ell'}$-integral isomorphism $\H^*_\et(X, \IZ_{\ell'}) \sto \H^*(X', \IZ_{\ell'})$ for every prime $\ell' \not\in \{\ell, \mathrm{char\,} k\}$. If $\mathrm{char\,} k = p$, then $\Phi^{-B \boxtimes B'}(\shE)$ in addition restricts to a $W$-integral isomorphism $\H^*_\cris(X/W) \to \H^*_\cris(X'/W)$.
\end{theorem}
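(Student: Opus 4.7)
The plan is to reduce the twisted integrality statement to the untwisted one, which for Fourier--Mukai kernels between K3 surfaces is known in every characteristic: Mukai treats the Betti case, Huybrechts--Stellari treat rational Hodge lattices, and Lieblich--Olsson prove the integral $\ell'$-adic and crystalline versions in positive characteristic. The main observation enabling the reduction is that since $\alpha, \alpha' \in \Br(-)[\ell^\infty]$, the twistings are cohomologically invisible at every prime $\ell' \neq \ell$ and at the crystalline level, where $\ell$ is a unit in $W$.

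Concretely, I would fix $\mu_{\ell^m}$-gerbes $\shX \to X$ and $\shX' \to X'$ representing $\beta, \beta'$. The kernel $\shE$ lifts to a sheaf on $\shX \times \shX'$, and $\shE^{\tensor \ell^m}$ descends to an untwisted complex on $X \times X'$. Unwinding the definition of $\ch^B$ gives an identity expressing $v^{-B \boxtimes B'}(\shE)$ as $\exp(-B \boxtimes 1 + 1 \boxtimes B')$ times a class obtained from $\shE^{\tensor \ell^m}$ by an $\ell^m$-th root; this identity is the technical heart of the argument and is already recorded in \cite[App.~A]{Bragg-Derived-Equiv} and \cite[\S2]{BY3}.

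At a prime $\ell' \notin \{\ell, p\}$, the integral class $\beta$ is $\ell^m$-torsion and hence vanishes in $\H^2_\et(X, \mu_{\ell'^k})$ for every $k$, so the gerbe $\shX$ is canonically trivializable over $\IZ_{\ell'}$-coefficients. A choice of trivialization recasts $\shE$ as an honest complex on $X \times X'$ integrally at $\ell'$, and one verifies that $v^{-B \boxtimes B'}(\shE)$, viewed in $\H^*_\et(X \times X', \IZ_{\ell'})$, agrees with the ordinary Mukai vector of this untwisted presentation. The untwisted integrality result then yields the desired $\IZ_{\ell'}$-integral isomorphism. The crystalline case proceeds identically: $\mu_{\ell^m}$ is étale of order coprime to $p$, so the gerbe is $W$-integrally trivializable crystallinely, and one reduces to the untwisted statement of Lieblich--Olsson.

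The principal obstacle is checking that the rational $\ell^m$-th root appearing in the definition of $\ch^B$ corresponds, at the primes $\ell' \neq \ell$ and crystallinely, to an integral operation on the nose. Equivalently, the ``$\ell$-adic denominator'' in $B = \beta/\ell^m$ must be shown to be an artifact of insisting on $\IQ_\ell$-coefficients rather than a genuine obstruction: once one passes to an $\ell'$- or $W$-integral trivialization of the gerbe, the $\ell^m$-th root cancels the visible denominator, and integrality is restored. This is ultimately a careful bookkeeping matching the gerbe presentation, the formula defining $\ch^B$, and the known untwisted integrality theorems.
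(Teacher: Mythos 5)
For this theorem the paper itself records no argument: it is imported from \cite[\S2]{BY3} and \cite[App.~A]{Bragg-Derived-Equiv}, the same places you point to for the ``technical heart''. Measured against those proofs, your sketch has the right skeleton (the $\ell$-power twisting should be invisible in $\IZ_{\ell'}$- and $W$-coefficients, reducing matters to untwisted integrality as in Lieblich--Olsson), but the decisive step as you state it does not exist. A $\mu_{\ell^m}$-gerbe is a geometric object: it is not ``trivializable over $\IZ_{\ell'}$-coefficients'', and no choice of auxiliary prime recasts the genuinely twisted kernel $\shE$ as an honest complex on $X \times X'$ to which the untwisted theorem could be applied. (Relatedly, it is $\alpha$, not $\beta \in \H^2_\et(X, \IZ_\ell(1))$, that is $\ell^m$-torsion; your phrasing conflates the B-field with the Brauer class.) What is true is weaker: since $\mu_{\ell^m}$ has order invertible in $\IZ_{\ell'}$ and in $W$, the map $\shX \times \shX' \to X \times X'$ induces an isomorphism on those cohomologies, so Chern classes of $\shE$ \emph{computed on the gerbe} may be regarded as classes on $X \times X'$. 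But one must then prove (i) that these classes are integral, and (ii) that they coincide with the $\ell'$-adic and crystalline realizations of the class $v^{-B \boxtimes B'}(\shE)$ defined by the $\ell^m$-th-root recipe. This comparison is the actual content of the cited references, not bookkeeping: the naive expansion of $\sqrt[\ell^m]{\mathrm{ch}(\shE^{\tensor \ell^m})}$ produces denominators not only at $\ell$ (from $(1+t)^{1/\ell^m}$) but also at primes dividing the rank of $\shE$ (from normalizing the degree-zero term) and at $2$ and $3$ (from $\mathrm{ch}$ and $\sqrt{\mathrm{td}}$ on the fourfold $X \times X'$), and showing that all of these cancel $\ell'$-adically and $W$-integrally is exactly what requires an argument. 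Note also that the identity you invoke, involving $\exp(-B \boxtimes 1 + 1 \boxtimes B')$, only lives in $\ell$-adic cohomology (the B-fields have no $\ell'$-adic or crystalline realization), so it says nothing directly about the coefficients at issue.

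A second, smaller omission: even granting integrality of the kernel class away from $\ell$, the assertion that the induced map is an \emph{isomorphism} of integral lattices does not follow from the untwisted statement; one must apply the same integrality to the kernel of a quasi-inverse equivalence and use that the two cohomological transforms compose to the identity, so that the two integral maps are mutually inverse. That step is routine (it is Mukai's argument, as in Lieblich--Olsson), but it is absent from your sketch. In sum, your proposal identifies the correct strategy, but at the point where the theorem's difficulty actually lives it either appeals to a nonexistent ``integral trivialization'' of the gerbe or defers to the very references whose result is being proved; as written it is not a proof.
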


In general, $\Phi^{-B \boxtimes B'}(\shE) : \H^*(X)_\IQ \sto \H^*(X')_\IQ$ does not need to preserve the codimension filtration, or restricts to an isogeny $\fh^2(X) \sto \fh^2(X')$. Nonetheless, it always restricts to an isomorphism $\ft(X) \sto \ft(X')$ which preserves the Poincar\'e pairing. One argues by Witt's cancellation theorem that $\NS(X)_\IQ$ is isomorphic to $\NS(X')_\IQ$ as quadratic forms, so that given such $\ft(X) \sto \ft(X')$ we can always complete to an isogeny $\fh^2(X) \sto \fh^2(X')$ by adding an isomorphism $\fa^2(X) \sto \fa^2(X')$ which preserves the Poincar\'e pairing. 

\begin{definition}
\begin{enumerate}[label=\upshape{(\alph*)}]
    \item A \textbf{primitive derived transcendental $\ell$-isogeny} is an isomorphism $\ft(X) \sto \ft(X')$ is given by $v^{-B \boxtimes B'}(\shE)$ for some choices of $B, B', \alpha, \alpha'$ and $\Phi_\shE : \ID(X, \alpha) \sto \ID(X', \alpha')$.
    \item A \textbf{primitive derived $\ell$-isogeny} $f : \fh^2(X) \sto 
    \fh^2(X')$ is an isogeny whose restriction to the transcendental parts is a primitive derived transcendental $\ell$-isogeny. 
    \item A (transcendental) \textbf{derived $\ell$-isogeny} is a finite composition of the primitive ones. 
\end{enumerate}
\end{definition}

\begin{remark}
We remark that isomorphisms between K3 surfaces and ``reflection in $(-2)$-curves'' on a single K3 (see \cite[\S6]{Yang}) give rise to primitive derived $\ell$-isogenies with $B, B', \alpha, \alpha'$ all being zero.
\end{remark}

\subsection{Existence Theorems}

\begin{theorem}
\label{thm: existence}
Let $X$ be a K3 surface over $k$. Denote by $\Lambda_\ell$ the quadratic lattice given by $\H^2_\et(X, \IZ_\ell)$. For every isometric embedding $\iota : \Lambda_\ell \into \H^2_\et(X, \IQ_\ell)$, there exists another K3 surface $X'$ together with a derived $\ell$-isogeny $f : \fh^2(X') \to \fh^2(X)$ such that $f(\H^2_\et(X, \IZ_\ell)) = \iota(\Lambda_\ell)$. 
\end{theorem}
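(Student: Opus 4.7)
The plan is to exhibit $\iota$ as a finite composition of ``elementary'' derived $\ell$-isogenies, each produced by a moduli space of twisted sheaves, and thereby build up $X'$ inductively.

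First I would set $L_0 := \H^2_\et(X, \IZ_\ell)$ and $L_1 := \iota(\Lambda_\ell)$, both full-rank $\IZ_\ell$-lattices in $V := \H^2_\et(X, \IQ_\ell)$ with isometric quadratic forms. By elementary divisor theory, any two such lattices are joined by a chain $L_0 = M_0, M_1, \ldots, M_n = L_1$ of $\IZ_\ell$-lattices in $V$ in which consecutive pairs share an index-$\ell$ sublattice (reduce first to $L_0 \cap L_1$ by stripping one $\ell$ off a generator at a time, then extend up to $L_1$ in the same fashion). It therefore suffices to realize each neighbor step $M_i \rightsquigarrow M_{i+1}$ by a primitive derived $\ell$-isogeny in the sense of the preceding definition, and then to compose.

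For one such step I would take a Brauer class $\alpha_i \in \Br(X_i)[\ell]$ whose $B$-field lift $B_i$ encodes the passage $M_i \rightsquigarrow M_{i+1}$, fix a polarization $H$ on $X_i$, and choose a primitive isotropic twisted Mukai vector $v \in \widetilde{\H}(X_i, \IZ_\ell)$ so that the moduli space $X_{i+1} := M_H(X_i, \alpha_i, v)$ of $H$-stable $\alpha_i$-twisted sheaves with Mukai vector $v$ is nonempty. Following Mukai in characteristic zero and Lieblich--Bragg in positive characteristic, $X_{i+1}$ is a K3 surface carrying a twisted universal kernel $\shE_i$ on $X_{i+1} \times X_i$ which induces a derived equivalence $\Phi_{\shE_i} : \ID(X_{i+1}, \alpha_i') \sto \ID(X_i, \alpha_i)$ for some $\alpha_i' \in \Br(X_{i+1})[\ell]$. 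Applying Thm~\ref{thm: ell prime integrality}, the twisted Mukai vector $v^{-B_i \boxtimes B_i'}(\shE_i)$ restricts to a primitive derived transcendental $\ell$-isogeny on $\ft$, which I would complete (by the Witt cancellation argument in the discussion after Thm~\ref{thm: ell prime integrality}) to a primitive derived $\ell$-isogeny $f_i : \fh^2(X_{i+1}) \sto \fh^2(X_i)$. A direct computation of the twisted Chern character with B-field $B_i$ then checks that $f_i(\H^2_\et(X_{i+1}, \IZ_\ell)) = M_{i+1}$ inside $V$, and composing yields the desired $X' := X_n$ and $f := f_0 \circ \cdots \circ f_{n-1}$.

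The main obstacle is the precise choice of the pair $(\alpha_i, v)$ at each neighbor step: $v$ must be primitive and isotropic for the Mukai pairing on $\widetilde{\H}(X_i, \IZ_\ell)$, and the associated twisted Chern character map must realize precisely the prescribed index-$\ell$ lattice modification coming from $B_i$. This amounts to solving an integral quadratic condition modulo $\ell$ that can be met using the high rank of the Mukai lattice; in positive characteristic one additionally needs the smoothness and projectivity of $M_H(X_i, \alpha_i, v)$ together with the existence of a twisted universal family, which are available in the theory developed in \cite{BY3}.
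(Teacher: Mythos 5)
There is a genuine gap, and it lies in your very first reduction. Every lattice of the form $f(\H^2_\et(X', \IZ_\ell))$ for a derived $\ell$-isogeny $f$ is an \emph{isometric} copy of $\Lambda_\ell$ inside $\H^2_\et(X, \IQ_\ell)$, because an isogeny preserves the Poincar\'e pairing and $\H^2_\et(X', \IZ_\ell)$ is unimodular. But the intermediate lattices $M_1, \ldots, M_{n-1}$ in your elementary-divisor chain (``stripping one $\ell$ off a generator at a time'' down to $L_0 \cap L_1$ and back up) are in general not unimodular --- passing to an index-$\ell$ sublattice multiplies the discriminant by $\ell^2$ --- so they are not isometric to $\Lambda_\ell$ and cannot be realized as $f_i(\H^2_\et(X_{i+1}, \IZ_\ell))$ for any K3 surface $X_{i+1}$ and any primitive derived $\ell$-isogeny $f_i$. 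The induction therefore cannot even begin: the individual neighbor steps you propose to realize by moduli of twisted sheaves are not of the shape that twisted Fourier--Mukai kernels produce. The decomposition that works, and the one the paper (following \cite{BY3}) uses, is group-theoretic rather than lattice-theoretic: since $\iota$ extends to an isometry of $\H^2_\et(X, \IQ_\ell)$, Cartan--Dieudonn\'e writes it as a composition of reflections $s_b$ in primitive vectors $b \in \H^2_\et(X, \IZ_\ell)$, so every intermediate lattice is automatically an isometric image of $\H^2_\et(X, \IZ_\ell)$; each single reflection is then realized by taking $n = b^2/2$, $B = b/n$, the Brauer class $\alpha$ of $B$, and the moduli space of stable $\alpha$-twisted sheaves with Mukai vector $(n, 0, 0)$.

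A secondary problem is your appeal to Thm~\ref{thm: ell prime integrality} to control the image lattice: that theorem only gives $\IZ_{\ell'}$-integrality at primes $\ell' \neq \ell$ (and crystalline integrality), whereas the whole content of the step $f_i(\H^2_\et(X_{i+1}, \IZ_\ell)) = M_{i+1}$ is an $\ell$-adic statement. In the correct argument this is exactly the computation (\cite[Lem.~6.2(1)]{BY3}) showing that, for a suitable B-field lift $B'$ on the moduli space, the cohomological action of the kernel carries $\H^2_\et(X', \IZ_\ell)$ onto $s_b(\H^2_\et(X, \IZ_\ell))$; it cannot be waved through as ``a direct computation of the twisted Chern character,'' since the choice of $(\alpha, v)$ and of $B'$ is precisely what makes the $\ell$-adic lattice come out right.
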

\begin{proof}
This is part of \cite[Thm~1.3]{BY3}. We explain the main steps for readers' convenience: First, by Cartan-Dieudonn\'e theorem, every element of $\O(\Lambda_\ell \tensor \IQ_\ell)$ can be written as a composition of relfections, so it suffices to treat the case when $\iota(\Lambda_\ell) = s_b(\H^2_\et(X, \IZ_\ell))$, where $s_b$ is the reflection in $b \in \H^2_\et(X, \IQ_\ell)$. We may assume $b$ is a primitive element in $\H^2_\et(X, \IZ_\ell)$. 

Now set $n = b^2/2$ and $B := b/n$. Let $\alpha$ be the Brauer class defined by $B$. Let $X'$ be the moduli space of stable $\alpha$-twisted sheaves with Mukai vector $(n, 0, 0)$. Then there is an equivalence $\Phi_\shE : \ID(X', \alpha') \sto \ID(X, \alpha)$ for some $\alpha' \in \Br(X')$. For a suitable B-field lift $B'$ of $\alpha'$, $v^{-B' \boxtimes B}(\shE)$ restricts to the desired isogeny $f$. For details of this argument, see \cite[Lem.~6.2(1)]{BY3}.
\end{proof}

When $k = \IC$, this gives an adelic proof of \cite[Thm~0.1]{Huy}, which refines \cite[Thm~1.1]{Buskin}: 
\begin{theorem}
\label{thm: ell Huy}
Let $X, X'$ be algebraic K3 surfaces over $\IC$. Every Hodge isometry $g : \H^2(X', \IQ) \sto \H^2(X, \IQ)$ is given by a derived isogeny.
\end{theorem}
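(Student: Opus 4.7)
The plan is to reduce to the Global Torelli theorem by induction on the finite set
$$\Sigma(g) := \{\ell \text{ prime} : g(\H^2(X', \IZ_\ell)) \neq \H^2(X, \IZ_\ell)\}$$
of primes at which $g$ fails to preserve integral lattices (here $\H^2(X, \IZ_\ell)$ is identified with $\H^2_\et(X, \IZ_\ell)$ via the Betti–\'etale comparison). This set is finite because $g$ is a $\IQ$-linear isometry of full $\IZ$-lattices and so has bounded denominators. At each inductive step I apply Thm~\ref{thm: existence} to remove a single bad prime.

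For the inductive step, pick any $\ell \in \Sigma(g)$. Choosing any isometry of quadratic lattices $\H^2(X', \IZ_\ell) \cong \Lambda_\ell$, the restriction of $g$ defines an isometric embedding $\iota : \Lambda_\ell \into \H^2_\et(X, \IQ_\ell)$. Thm~\ref{thm: existence} then furnishes a K3 surface $X''$ and a derived $\ell$-isogeny $f : \fh^2(X'') \to \fh^2(X)$ with $f(\H^2_\et(X'', \IZ_\ell)) = \iota(\Lambda_\ell) = g(\H^2(X', \IZ_\ell))$. Writing $f_B$ for the induced map on rational Betti cohomology, set $h := g^{-1} \circ f_B : \H^2(X'', \IQ) \to \H^2(X', \IQ)$; this is again a Hodge isometry. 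By construction $h$ is a $\IZ_\ell$-integral isomorphism, and Thm~\ref{thm: ell prime integrality} (applied with $\mathrm{char\,} k = 0$) ensures that $f$ is $\IZ_{\ell'}$-integral for every $\ell' \neq \ell$, from which a direct check gives $\Sigma(h) = \Sigma(g) \setminus \{\ell\}$. The inductive hypothesis then makes $h$ a derived isogeny, whence so is $g = f_B \circ h^{-1}$.

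The base case $\Sigma(g) = \emptyset$ reduces $g$ to an integral Hodge isometry $\H^2(X', \IZ) \sto \H^2(X, \IZ)$. By the Global Torelli theorem for complex K3 surfaces, after composing with a finite sequence of reflections in $(-2)$-classes, $g$ becomes effective and is therefore induced by an isomorphism $X \sto X'$. Both isomorphisms of K3 surfaces and $(-2)$-reflections are realized as primitive derived $\ell$-isogenies (for any $\ell$) by the remark preceding Thm~\ref{thm: existence}, so $g$ is a derived isogeny.

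The main obstacle will be the base case: one must verify that each reflection used to render the integral Hodge isometry effective is genuinely a primitive derived $\ell$-isogeny in the precise sense of the paper, i.e.\ arises from an explicit twisted Fourier--Mukai kernel. This is asserted in the paper's remark and ultimately relies on identifying spherical twists in $(-2)$-classes with (twisted) Fourier--Mukai equivalences with trivial $B$-fields. The inductive step itself is essentially bookkeeping once Thms~\ref{thm: existence} and~\ref{thm: ell prime integrality} are available.
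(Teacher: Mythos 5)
Your proposal is correct and follows essentially the same route as the paper: apply Thm~\ref{thm: existence} at the finitely many primes where $g$ fails to be integral (your prime-by-prime induction is just explicit bookkeeping for the paper's ``applying Thm~\ref{thm: existence} for different $\ell$'s''), reduce to an integral Hodge isometry, and conclude via reflections in $(-2)$-classes (together with $\pm\mathrm{id}$, which you should include to handle the positive cone) and the global Torelli theorem, exactly as in the paper's proof, which cites \cite[Lem.~6.2]{Yang} for this last step.
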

\begin{proof}
By applying Thm~\ref{thm: existence} for different $\ell$'s, we find a derived isogeny $f : \fh^2(X'') \sto \fh^2(X)$ such that $f^*(\H^2(X'', \IZ)) = g(\H^2(X', \IZ))$ for some other K3 surface $X''$. The composition $g \circ f^*$ is an integral Hodge isometry $\H^2(X'', \IZ) \sto \H^2(X', \IZ)$. By \cite[Lem.~6.2]{Yang}, up to precomposing $\pm f$ by a composition of reflections in $(-2)$-curves, we may assume $g \circ f^*$ preserves the ample cones. Therefore, $g \circ f^*$ is induced by an isomorphism by the global Torelli theorem. 
\end{proof}

\subsection{Torelli Theorem}
If a lifting $X_W$ of $X$ over $W$ carries a lifting of all line bundles on $X$, i.e., the natural map $\Pic(X_W) \to \Pic(X)$ is an isomorphism, then we say that $X_W$ is a \textbf{perfect} lifting. By \cite[Prop.~4.10]{LM}, perfect liftings always exist for non-supersingular K3 surfaces. 

\begin{theorem}
\label{thm: lift}
Let $f : \fh^2(X) \sto \fh^2(X')$ be a derived $\ell$-isogeny between non-supersingular K3 surfaces $X, X'$. For every perfect lifting $X_W$ of $X$ over $W$, there exists a perfect lifting $X'_W$ such that $f$ lifts to an isogeny between their generic fibers of $X_W$ and $X_W'$. 
\end{theorem}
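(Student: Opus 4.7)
The plan is to reduce to the case of a primitive derived $\ell$-isogeny and then carry out a relative moduli-space construction over $W$. Since a derived $\ell$-isogeny is by definition a finite composition of primitive ones, one argues inductively: given a factorization $X = X_0 \to X_1 \to \cdots \to X_n = X'$ into primitive isogenies, lifting one stage at a time produces perfect lifts of each intermediate K3 and generic-fiber isogenies connecting them, whose composition lifts $f$. We may therefore assume $f$ is primitive, arising from a twisted derived equivalence $\Phi_\shE : \ID(X, \alpha) \sto \ID(X', \alpha')$ of $\ell^\infty$-order twisted K3 surfaces with chosen B-field lifts $B, B'$.

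The first step is to lift the twisting data. Because $X_W$ is a perfect lifting, $\Pic(X)$ lifts entirely; combining this with the Kummer sequence and smooth-proper base change for $\H^2_\et$ (with $\ell$ invertible on $W$), we obtain a class $\alpha_W \in \Br(X_W)[\ell^\infty]$ reducing to $\alpha$, together with a B-field lift $B_W \in \H^2_\et(X_W, \IQ_\ell(1))$ reducing to $B$.

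The main geometric step is constructing $X'_W$. Following the proof of Thm~\ref{thm: existence}, Cartan--Dieudonn\'e lets us reduce to the case where $X'$ is realized as a moduli space $\sM(X,\alpha;v)$ of stable $\alpha$-twisted sheaves with Mukai vector $v$ and $\shE$ is the universal kernel. The relative moduli space $\sM(X_W,\alpha_W;v) \to \Spec W$ is a smooth proper family of K3 surfaces by the deformation theory of stable twisted sheaves on K3 surfaces (cf.\ \cite{Bragg-Derived-Equiv}); we take $X'_W := \sM(X_W,\alpha_W;v)$, whose special fiber is $X'$. The universal twisted kernel $\shE_W$ on $X_W \times_W X'_W$ induces a twisted derived equivalence between the generic fibers that specializes to $\Phi_\shE$; its twisted Mukai vector $v^{-B_W \boxtimes B'_W}(\shE_W)$ restricts on the generic fiber to an isomorphism of transcendental motives lifting the transcendental part of $f$. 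We complete this to an isogeny $f_W$ between the generic fibers of $X_W$ and $X'_W$ by adjoining a compatible isometry on the algebraic parts, using Witt cancellation on the $\NS$'s.

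Finally, perfectness of $X'_W$ follows because the universal twisted kernel is an algebraic correspondence that transports algebraic classes from the generic fiber of $X_W$ (which, by perfectness, span all of $\NS(X)_\IQ$) to algebraic classes on the generic fiber of $X'_W$ of rank sufficient to span $\NS(X')_\IQ$; combined with the injectivity of the specialization map for K3 surfaces, this forces $\Pic(X'_W) \to \Pic(X')$ to be an isomorphism. The main obstacle is the relative deformation theory of moduli of twisted sheaves---specifically, smoothness, properness, and non-emptiness of $\sM(X_W,\alpha_W;v)$, and the compatibility of B-field lifts across $W$ so that $v^{-B_W \boxtimes B'_W}(\shE_W)$ is well-defined and integral. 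These are handled by now-standard results on twisted sheaves on K3 surfaces.
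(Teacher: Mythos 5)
There is a genuine gap at the central step, namely the claim that ``Cartan--Dieudonn\'e lets us reduce to the case where $X'$ is realized as a moduli space $\sM(X,\alpha;v)$ of stable $\alpha$-twisted sheaves and $\shE$ is the universal kernel.'' The Cartan--Dieudonn\'e argument in the proof of Thm~\ref{thm: existence} is used to \emph{construct some} derived $\ell$-isogeny whose image lattice is a prescribed $\iota(\Lambda_\ell)$; it controls only the lattice image, not the isometry itself, and a fortiori not the given motivic map $f$. Here you are handed a specific primitive derived $\ell$-isogeny, i.e.\ a specific kernel $\shE$ (together with B-fields) whose induced action on $\ft(X)$ is what must be lifted. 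Replacing $\shE$ by the universal sheaf of a moduli space changes the induced transcendental isometry in general; the two kernels differ by an autoequivalence of $\ID(X,\alpha)$ (shifts, twists, spherical twists, \dots), and lifting such autoequivalences is exactly the kind of statement you cannot take for granted. Composing with the inverse of the moduli-space isogeny leaves you with an integral self-isogeny of $X$ which you would still need to lift, and that is essentially the content of the Torelli-type statement Thm~\ref{thm: Torelli}, which requires polarizability --- an issue your proposal never addresses. This is precisely where the paper uses the hypothesis that $X_W$ is \emph{perfect}: a perfect lifting carries lifts of all $(-2)$-curves, so by \cite[Lem.~6.2]{Yang} one may modify $f$ by liftable reflections and reduce to the case that $f$ is polarizable; in your write-up perfectness is only used to lift $\Pic$ and the Brauer class, so the place where the hypothesis actually does its work is missing.

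The second problem is that, having made the (unjustified) reduction, you replace the paper's key input --- \cite[Thm~4.6]{BY3}, which lifts the \emph{given} twisted Fourier--Mukai kernel $\shE$, after the polarizability reduction, to a complex on $X_W\times X'_W$ for a suitable lifting $X'_W$ --- by an appeal to ``now-standard'' relative smoothness, properness and non-emptiness of $\sM(X_W,\alpha_W;v)$ over $W$, together with compatibility of B-fields in the family. Even granting those moduli-theoretic facts, they would only lift a moduli-space-type equivalence, not $f$. Two smaller points: in the final step, the twisted Mukai transform does not preserve the codimension filtration, so it does not ``transport algebraic classes to algebraic classes'' spanning $\NS(X')_\IQ$; the correct way to get perfectness of $X'_W$ is the rank count through the transcendental parts (the generic-fiber isogeny forces $\rank\NS(X'_K)=\rank\NS(X')$, and then injectivity plus saturation of the specialization map concludes). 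Also, in the induction over a factorization of $f$ into primitive isogenies you should note that each intermediate surface is again non-supersingular (same height, via the crystalline realization) and each intermediate lifting produced is again perfect, so the induction can continue.
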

\begin{proof}
It suffices to treat the case when $f$ is primitive, so that the restriction of $f$ to the transcendental parts is given by a twisted Fourier-Mukai equivalence $\Phi_\shE : \ID(X, \alpha) \sto \ID(X', \alpha')$ for some Brauer classes $\alpha, \alpha'$ of $\ell$-power order. Moreover, since we are considering perfect liftings, which in particular carry all liftings of $(-2)$-curves, using \cite[Lem.~6.2]{Yang} we reduce to the case when $f$ is polarizable. Any lifting $X_W$ carries a unique lift $\alpha_W$ of the Brauer class $\alpha$, so we may now conclude by \cite[Thm~4.6]{BY3}, which states that for some lifting $X'_W$ of $X$, $\shE$ lifts up to quasi-isomorphism to a complex in $\ID(X_W \times X_W', - \alpha_W \boxtimes \alpha_W')$, where $\alpha'_W$ is the lift of $\alpha'$ on $X_W'$. Moreover, if $X$ is ordinary and $X_W$ is the canonical lifting, then $X'$ is also ordinary and $X'_W$ is the canonical lifting of $X'$. 

For the second statement, note that since $\H^2_\cris(X/W) \iso \H^2_\cris(X'/W)$, $X$ and $X'$ have the same height. Moreover, $\H^2_\et(X_{\bar{K}}, \IZ_p) \iso \H^2_\et(X'_{\bar{K}}, \IZ_p)$ as $\Gal_{K}$-modules. It follows from \cite[Thm~C]{TaelmanOrd} that if $X_W$ is the canonical lifting, then so is $X'_W$. 
\end{proof} 

\begin{theorem}
\label{thm: Torelli}
Let $f : \fh^2(X) \sto \fh^2(X')$  be derived $\ell$-isogeny. If $f$ is polarizable and $\w(f)$ restricts to an isomorphism $\H^2(X) \sto \H^2(X')$, then $f$ is induced by a unique isomorphism $X' \sto X$. 
\end{theorem}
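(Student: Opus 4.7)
The plan is to reduce to the global Torelli theorem over $\IC$ via the lifting result above, treating the supersingular case by Ogus' crystalline Torelli theorem. Uniqueness is automatic: any automorphism of a K3 surface that acts trivially on $\fh^2$ is necessarily trivial, so an isomorphism $g : X' \sto X$ inducing $f$ is unique if it exists.

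For existence we may assume $k$ is algebraically closed. The hypothesis that $\w(f)$ restricts to an isomorphism $\H^2(X) \sto \H^2(X')$ means $\w_\ell(f)$ is integral on $\H^2_\et(-, \IZ_\ell)$ for every $\ell \neq \mathrm{char}\,k$, and, in positive characteristic, $\w_\cris(f)$ is integral on $\H^2_\cris(-/W)$; polarizability then matches the ample cone of $X$ with that of $X'$. When $\mathrm{char}\,k = 0$, a standard spreading-out argument reduces to $k = \IC$; here $\w(f)$ becomes an integral Hodge isometry $\H^2(X', \IZ) \sto \H^2(X, \IZ)$ preserving ample cones, and global Torelli produces the desired $g$.

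In positive characteristic, first suppose $X$ is not supersingular. Pick a perfect lifting $X_W$ of $X$ over $W = W(k)$. By Thm~\ref{thm: lift}, there is a perfect lifting $X'_W$ of $X'$ and an isogeny $f_K : \fh^2(X_K) \sto \fh^2(X'_K)$ lifting $f$ over the fraction field $K$. Perfectness of both lifts, combined with the Berthelot--Ogus and smooth proper base change comparisons, transports the integrality of $\w_\ell(f)$ and $\w_\cris(f)$ to integrality of $\w(f_K)$ on Betti and de Rham realizations at the geometric generic fiber. The characteristic zero case then yields an isomorphism $g_{\bar K} : X'_{\bar K} \sto X_{\bar K}$ inducing $f_K$. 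This descends to $K$ and extends uniquely across $W$ by the valuative criterion applied to the stack of polarized K3 surfaces; its reduction is the desired $g$.

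In the supersingular case, perfect liftings do not exist, so I would appeal directly to Ogus' crystalline Torelli theorem: $\w_\cris(f)$ is an isomorphism of $F$-crystals preserving the N\'eron--Severi lattice and sending an ample class to an ample class, hence is induced by a unique isomorphism $X' \sto X$. The main obstacle in this plan is the bookkeeping needed to extract a genuine integral Hodge isometry at the lift from the separate $\ell$-adic and crystalline integrality hypotheses on $\w(f)$, which relies on the compatibility of integral comparison isomorphisms for perfectly-lifted K3 surfaces; once this is in hand the remainder is a direct invocation of the known Torelli theorems.
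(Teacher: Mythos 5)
The paper does not actually prove this theorem in-house: it defers to \cite{BY3}, and the remark following the statement indicates that in the prime-to-$p$ case relevant here the proof is exactly the lifting strategy you propose (lift via Thm~\ref{thm: lift}, apply the global Torelli theorem in characteristic zero, treat supersingular surfaces separately). So your overall route is the intended one, and your uniqueness argument via cohomological rigidity is fine. The problem is the step where you transport integrality to the lift. Smooth and proper base change does give $\IZ_{\ell'}$-integrality of $\w(f_K)$ for every $\ell' \neq p$, but the Berthelot--Ogus isomorphism only compares crystalline with de Rham cohomology; it sees nothing of the lattice $\H^2_\et(X_{\bar{K}}, \IZ_p)$, and the Betti--de Rham comparison over $\IC$ does not respect integral structures. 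Hence crystalline integrality of $\w_\cris(f)$ does not, through the comparisons you invoke, yield $p$-integrality of the Betti realization of $f_K$ --- which you need before quoting global Torelli, since $\IZ$-integrality of a rational Hodge isometry requires integrality at \emph{all} finite places. This is not bookkeeping: to close it you need either genuine integral $p$-adic Hodge theory (Fontaine--Laffaille suffices here, as the degree is $2 \le p-2$ when $p \ge 5$), or, closer to the spirit of the paper, the observation that since $\ell \neq p$ Thm~\ref{thm: lift} lifts the twisted Fourier--Mukai kernels themselves, so $f_K$ is again a derived $\ell$-isogeny in characteristic zero and the characteristic-zero analogue of Thm~\ref{thm: ell prime integrality} makes it automatically integral at every prime different from $\ell$, in particular at $p$; base change is then only needed at $\ell$, which is exactly where the hypothesis on $\w_\ell(f)$ enters.

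With that repair the finite-height case goes through as you describe: polarizability plus integrality gives preservation of ample chambers, and separatedness of the moduli stack (Matsusaka--Mumford) lets the isomorphism over $\bar{K}$ specialize and induce $f$. For the supersingular case your appeal to Ogus is the right idea but should be stated with its hypotheses: it requires $p \ge 5$, and one must note that $\ft(X) = 0$ there, so that $f$ is a morphism between sums of Lefschetz motives and therefore induces a rational isometry $\NS(X)_\IQ \to \NS(X')_\IQ$, which your integrality hypotheses make integral and which polarizability makes ample-cone preserving, compatibly with the Frobenius-equivariant isomorphism of K3 crystals; Ogus' crystalline Torelli theorem is stated in exactly those terms.
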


\begin{remark}
Thm~\ref{thm: Torelli} is proved in \cite{BY3} in more generality, which also deals with derived equivlences between twisted K3 surfaces whose Brauer classes are not of prime-to-$p$ order. The proof there also becomes significantly more involved because when the order of $\alpha \in \Br(X)$ is not prime-to-$p$, it is not true that every lifting $X_W$ of $X$ over $W$ carries a lifting of $\alpha$, so the proof of Thm~\ref{thm: lift} breaks down. 
\end{remark}

\subsection{Galois Descent} Using Thm~\ref{thm: Torelli} we give a Galois descent theorem for derived $\ell$-isogenies: 
\begin{theorem}
\label{thm: Gal descent}
Let $k$ be a perfect field with algebraic closure $\bar{k}$. Let $f : \bar{X}' \sto \bar{X}$ be a derived $\ell$-isogeny bewteen two K3 surface $\bar{X}, \bar{X}'$ over $\bar{k}$. Suppose $\bar{X}$ admits a model $X$ over $k$ and the $\IZ_\ell$-lattice $f^*(\H^2_\et(\bar{X}', \IZ_\ell))$ in $\H^2_\et(\bar{X}, \IZ_\ell)$ is invariant under the induced $\Gal_k$-action, then $\bar{X}'$ admits a model $X'$ over $k$ such that $f$ decends to a derived $\ell$-isogeny $\fh^2(X') \sto \fh^2(X)$. 
\end{theorem}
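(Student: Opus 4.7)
The plan is classical Galois descent: for each $\sigma \in \Gal_k$, produce an isomorphism $\psi_\sigma \colon \bar{X}' \sto {}^\sigma \bar{X}'$ by applying the Torelli statement (Thm~\ref{thm: Torelli}) to a well-chosen derived $\ell$-isogeny, verify the cocycle condition by uniqueness, and then invoke effective descent for projective varieties.

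First I reduce to the case when $f$ is polarizable. Choose an ample class $\xi$ on the model $X$, viewed as a $\Gal_k$-invariant ample class on $\bar{X}$. By composing $f$ with a composition of reflections in $(-2)$-curves on $\bar{X}'$---which are themselves primitive derived $\ell$-isogenies by the remark following the definition, and act by isometries on $\H^2_\et(\bar{X}', \IZ_\ell)$, so the hypothesis that $L := \w_\ell(f)(\H^2_\et(\bar{X}', \IZ_\ell))$ is $\Gal_k$-stable is preserved---one may arrange that $\xi' := \w(f)^{-1}(\xi)$ lies in the ample cone of $\bar{X}'$, so $f$ is polarizable. Now set
\[ g_\sigma := f^{-1} \circ {}^\sigma f \colon \fh^2({}^\sigma \bar{X}') \sto \fh^2(\bar{X}'), \]
a derived $\ell$-isogeny. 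It is polarizable because ${}^\sigma \xi'$ is ample on ${}^\sigma \bar{X}'$ and $\w(g_\sigma)({}^\sigma \xi') = \w(f)^{-1}(\xi) = \xi'$. Moreover, $\w(g_\sigma)$ restricts to an integral isomorphism on $\H^2$: for primes $\ell' \neq \ell, p$ and for crystalline cohomology this is automatic by Thm~\ref{thm: ell prime integrality}, while at the $\ell$-adic component the hypothesis gives (after canonical identifications)
\[ \w_\ell({}^\sigma f)(\H^2_\et({}^\sigma \bar{X}', \IZ_\ell)) = \sigma(L) = L, \]
so $\w_\ell(g_\sigma)$ sends $\H^2_\et({}^\sigma \bar{X}', \IZ_\ell)$ onto $\w_\ell(f)^{-1}(L) = \H^2_\et(\bar{X}', \IZ_\ell)$. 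Thus Thm~\ref{thm: Torelli} produces a unique isomorphism $\psi_\sigma \colon \bar{X}' \sto {}^\sigma \bar{X}'$ inducing $g_\sigma$.

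The tautological identity $g_{\sigma \tau} = g_\sigma \circ {}^\sigma g_\tau$ combined with the uniqueness in Thm~\ref{thm: Torelli} forces $\psi_{\sigma \tau} = {}^\sigma \psi_\tau \circ \psi_\sigma$, so $\{\psi_\sigma\}$ is a Galois descent datum on the projective variety $\bar{X}'$; it is effective, and produces a model $X'$ over $k$. The identity $f \circ \psi_\sigma^* = {}^\sigma f$, which is simply the definition of $g_\sigma$, expresses that $f$ intertwines the descent data on $\bar{X}'$ and on $\bar{X}$; hence $f$ descends to an isogeny $\fh^2(X') \sto \fh^2(X)$ in $\shM_\hom(k)$ that remains a derived $\ell$-isogeny after base change to $\bar{k}$. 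The most delicate step is the $\ell$-adic integrality check for $\w(g_\sigma)$: this is the only place where the $\Gal_k$-invariance of $L$ is used, and it requires careful bookkeeping of the canonical identifications of $\H^2_\et({}^\sigma \bar{X}', \IZ_\ell)$ with $\H^2_\et(\bar{X}', \IZ_\ell)$ and of the Galois action on $\H^2_\et(\bar{X}, \IZ_\ell)$.
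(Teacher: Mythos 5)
Your construction is the same as the paper's: conjugate $f$ by the descent datum of the model $X$ to get derived $\ell$-isogenies $g_\sigma = f^{-1}\circ\varphi_\sigma\circ f^\sigma$, check $\ell$-integrality from the Galois-stability of the image lattice (integrality at $\ell'\neq\ell$ and crystalline integrality being automatic from Thm~\ref{thm: ell prime integrality}), apply Thm~\ref{thm: Torelli} to realize each $g_\sigma$ by an isomorphism, verify the cocycle condition (you via uniqueness in Thm~\ref{thm: Torelli}, the paper via cohomological rigidity of K3 surfaces --- equivalent), and finally descend $f$ itself by the equivariance of $\w(f)$ plus averaging.

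The one substantive deviation is your opening reduction, and as written it does not quite prove the stated conclusion. You replace $f$ by $\tilde f = f\circ r$, with $r$ a composition of reflections in $(-2)$-curves on $\bar X'$ (and implicitly possibly a sign, since $\w(f)^{-1}(\xi)$ is only known to lie in the positive cone up to $\pm 1$), in order to make the Torelli hypothesis of polarizability hold for the $g_\sigma$'s. The descent datum $\{\psi_\sigma\}$ you then build realizes $\tilde f^{-1}\circ{}^\sigma\tilde f$, so the intertwining identity and the final descent apply to $\tilde f$, not to $f$: recovering $f=\tilde f\circ r^{-1}$ over $k$ would require the reflections $r$ to be Galois-equivariant for the model $X'$ you just constructed, and the $(-2)$-curves involved need not be defined over $k$. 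So your argument yields a model $X'$ over which $f\circ r$ descends --- enough for the finiteness application in Thm~\ref{thm: fin}, but strictly weaker than ``$f$ descends.'' The paper avoids this by applying Thm~\ref{thm: Torelli} directly to $\varphi'_\sigma$ without modifying $f$ (leaving the polarizability hypothesis tacit), so the exact morphism $f$ is the one that descends. To repair your version you should either verify polarizability of each $g_\sigma$ directly (rather than of $f$), or argue that the correction $r$ can be chosen compatibly with the Galois action.
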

\begin{proof}
For each $\sigma \in \Gal_k$, we use supscript $\sigma$ to denote base change via $\sigma : \bar{k} \to \bar{k}$. Let us write $b_\sigma : \bar \to X$ and $b'_\sigma : (X')^\sigma \to X'$ for the canonical base change morphisms. Suppose $\{ \varphi_\sigma : \bar{X}^\sigma \stackrel{\sim}{\to} \bar{X} \}_{\sigma \in \Gal_k}$ is the descent datum for $\bar{X}$ defined by the model $X$. Our goal is to transport $\{ \varphi_\sigma \}$ to a descent datum for $X'$. Let $\varphi'_\sigma : \fh^2((X')^\sigma) \rightsquigarrow \fh^2(X')$ be the defined by the composition $$ \fh^2((\bar{X}')^\sigma) \stackrel{f^\sigma}{\to} \fh^2(\bar{X}^\sigma) \stackrel{\varphi_\sigma}{\to} \fh^2(\bar{X}) \stackrel{f^{-1}}{\to} \fh^2(\bar{X}').   $$
It is clear that $\varphi'_\sigma$ is a derived $\ell$-isogeny. 

We claim that $\w(\varphi'_\sigma)$ restricts to an isomorphism $\H^2_\et((\bar{X}')^\sigma, \IZ_\ell) \sto \H^2_\et(\bar{X}, \IZ_\ell)$. Since $f$ is an $\ell$-isogeny, it suffices to check that $\varphi'_\sigma$ is $\ell$-integral. Recall that $b_\sigma$ and $b_\sigma'$ induce canonical identifications $\H^2_\et(\bar{X}^\sigma, \IZ_\ell) = \H^2_\et(\bar{X}, \IZ_\ell)$ and $\H^2_\et((X')^\sigma, \IZ_\ell) = \H^2_\et(\bar{X}', \IZ_\ell)$, the action of $\sigma$ on $\H^2_\et(\bar{X}, \IZ_\ell)$ comes exactly from $\varphi_\sigma$. Therefore, the claim follows from the assumption that $f^*(\H^2_\et(\bar{X}', \IZ_\ell))$ is ${\Gal_k}$-stable. 

By Thm~\ref{thm: Torelli}, $\varphi'_\sigma$ is induced by an isomorphism, which we still denote by $\varphi'_\sigma$. Since ${\varphi_\sigma}$ satisfies the cocycle condition, one easily checks using the cohomological rigidity of K3 surfaces (\cite[Prop.~3.4.2]{Rizov1}) that $\{\varphi'_\sigma\}$ also satisfies the cocyle condition. Therefore, $\{ \varphi'_\sigma \}$ defines a descent datum for $X'$, which gives rise to a model $Y'$ over $k$. By construction, the map $f^* : \H^2_\et(\bar{X}', \IQ_\ell) \to \H^2_\et(\bar{X}, \IQ_\ell)$ is ${\Gal_k}$-equivariant for the ${\Gal_k}$-actions induced by $X, X'$. By a standard averaging argument, $f$ descends to an isogeny $\fh^2(X') \sto \fh^2(X)$. 
\end{proof}

\section{Relations between cycle conjectures}
\subsection{Linear Algebra} \label{sec: Linear Algebra} Let $V_\ell$ be a $\IQ_\ell$ vector space of dimension $2r$ equipped with a non-degenerate bilinear pairing $q : V_\ell \times V_\ell \to \IQ_\ell$. Note that $q$ provides an identification of $V_\ell$ with $V_\ell^\vee$, through which induces a natural pairing $q^{\tensor (1, 1)}$ on $\End (V_\ell) \iso V_\ell \tensor V_\ell$ given by the formula
\begin{align}
    \label{eqn: trace formula}
    q^{\tensor (1, 1)}(\alpha, \beta) = \tr(\alpha \beta^\dagger)
\end{align}
where $\beta^\dagger$ denotes the adjoint of $\beta$ under $q$. 

Let $\U(E)$ denote the group scheme defined by units of $E$: For each $\IQ$-algebra $R$, set $\U(E) := (E \tensor_\IQ R)^\times$. In particular, if $E$ is a field extension of $\IQ$, then $\U(E)$ is nothing but the usual Weil restriction $\Res_{E/\IQ} \IG_m$. 

\begin{lemma}
\label{lem: faithfully flat}
If the natural map $E \tensor \IQ_\ell \to \End(V_\ell)$ is injecture, or equivalently, $\dim_\IQ E = \dim_{\IQ_\ell} \IQ_\ell \< E\>$, then the natural map $\U(E) \tensor \IQ_\ell \to \GL(V_\ell)$ is injective.
\end{lemma}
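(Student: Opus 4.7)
The plan is to verify the claim on $R$-points for every $\IQ_\ell$-algebra $R$, using flatness of finite-dimensional $\IQ_\ell$-vector spaces to propagate the hypothesized injectivity through arbitrary base changes.

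First I would unwind the functorial definitions. On $R$-points the map in question reads
\[
(E \tensor_\IQ R)^\times \;\to\; \GL(V_\ell \tensor_{\IQ_\ell} R),
\qquad e \mapsto \iota_R(e),
\]
where $\iota_R$ is obtained from the structural map $\iota : E \tensor_\IQ \IQ_\ell \to \End(V_\ell)$ by applying $-\tensor_{\IQ_\ell} R$ and using the canonical identifications $(E \tensor_\IQ \IQ_\ell) \tensor_{\IQ_\ell} R \cong E \tensor_\IQ R$ and $\End(V_\ell) \tensor_{\IQ_\ell} R \cong \End_R(V_\ell \tensor_{\IQ_\ell} R)$.

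Next, I would observe that by hypothesis $\iota$ is an injection of finite-dimensional $\IQ_\ell$-vector spaces, hence splits as a $\IQ_\ell$-linear map. Splitting is preserved under any base change, so $\iota_R$ remains an injection for every $\IQ_\ell$-algebra $R$. If now $e \in (E \tensor_\IQ R)^\times$ lies in the kernel, then $\iota_R(e) = 1 = \iota_R(1)$, whence $e = 1$ by injectivity of $\iota_R$. Therefore the map of group-valued functors has trivial kernel on every $R$-point, which is the desired injectivity of the morphism of group schemes.

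If one prefers a scheme-theoretic formulation, the same input shows more: an injective $\IQ_\ell$-linear map between finite-dimensional $\IQ_\ell$-vector spaces induces a closed immersion of the associated affine spaces, and restricting to the open loci of units on either side yields a locally closed immersion $\U(E) \tensor \IQ_\ell \hookrightarrow \GL(V_\ell)$, which refines the mere monomorphism statement. There is essentially no obstacle here; the entire content of the lemma is that $\IQ_\ell$-linear injections of finite-dimensional spaces are universally injective, and the only thing to keep track of is the functor-of-points translation.
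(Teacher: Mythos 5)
Your proof is correct and follows essentially the same route as the paper: evaluate the morphism on $R$-points for an arbitrary $\IQ_\ell$-algebra $R$, identify it with the restriction of the base-changed linear map $E \tensor_\IQ R \to \End(V_\ell \tensor R)$, and use that injectivity is preserved under base change (your splitting argument is just a hands-on form of the paper's appeal to flatness of algebras over a field).
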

\begin{proof}
When evaluated on a $\IQ_\ell$-algebra $R$, $\U(E) \tensor \IQ_\ell \to \GL(V_\ell)$ becomes the map $(E \tensor_\IQ R)^\times \to \GL(V_\ell \tensor R)$. This map is a restriction of the $R$-linear map $E \tensor_\IQ R \to \End(V_\ell \tensor R)$ which is obtained from $E \tensor \IQ_\ell \to \End(V_\ell)$ via base change. Then we use the fact that algebras over a field are flat. 
\end{proof}

For a quick non-example, note that if $\dim V_\ell = 1$, $E = \IQ(\alpha)$ for some $\alpha \in \IQ_\ell$ which is a root to some irreducible polynomial over $\IQ$ of degree $\ge 2$, then $\U(E) \tensor \IQ_\ell$ cannot inject into $\GL(V_\ell)$ as $\dim \U(E) \ge 2$.  

\begin{lemma}
\label{lem: pairing repellent}
Let $W_\ell$ be any finite dimensional $\IQ_\ell$ vector space equipped with a non-degenerate pairing $W_\ell \times W_\ell \to \IQ_\ell$. Let $W$ be a $\IQ$-vector space embedded in $W_\ell$ such that the natural map $j_\ell : W \tensor \IQ_\ell \to W_\ell$ is surjective. If the pairing on $W_\ell$ restricts to a pairing $W \times W \to \IQ$ on $W$, then $j_\ell$ must also be injective. 
\end{lemma}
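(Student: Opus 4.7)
The plan is to show the statement by exhibiting $W$ as the $\IQ$-span of a $\IQ_\ell$-basis of $W_\ell$ drawn from $W$, which would force $\dim_\IQ W = \dim_{\IQ_\ell} W_\ell$ and hence turn the surjection $j_\ell$ into an isomorphism by dimension count. The key leverage is that the pairing provides a way to extract rational coordinates from $\IQ_\ell$-coordinates.

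First I would use the surjectivity of $j_\ell$ to pick $w_1,\dots,w_n \in W$ whose images form a $\IQ_\ell$-basis of $W_\ell$, where $n = \dim_{\IQ_\ell} W_\ell$. Form the Gram matrix $G = (q(w_i,w_j))_{i,j}$. By hypothesis the pairing restricts to $\IQ$ on $W \times W$, so $G \in M_n(\IQ)$. On the other hand, since $q$ is non-degenerate on $W_\ell$ and the $w_i$ form a $\IQ_\ell$-basis, $G$ is invertible over $\IQ_\ell$; as $\det G \in \IQ$ this forces $G \in \GL_n(\IQ)$.

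Next, given any $w \in W$, expand $w = \sum_i a_i w_i$ with $a_i \in \IQ_\ell$ (possible because $\{w_i\}$ is a $\IQ_\ell$-basis of $W_\ell$ and $W \subset W_\ell$). Pairing with $w_j$ yields
\[
q(w, w_j) \;=\; \sum_i a_i\, G_{ij},
\]
and the left hand side lies in $\IQ$ since $w, w_j \in W$. Thus the row vector $(a_1,\dots,a_n)\cdot G$ has entries in $\IQ$; invertibility of $G$ over $\IQ$ then forces $a_i \in \IQ$ for every $i$. Hence $W$ coincides with the $\IQ$-span of $w_1,\dots,w_n$, so $\dim_\IQ W = n$ and $j_\ell \colon W \otimes_\IQ \IQ_\ell \to W_\ell$ is a surjection of equidimensional finite-dimensional $\IQ_\ell$-vector spaces, hence an isomorphism.

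I do not anticipate a real obstacle: the only thing to be careful about is not assuming a priori that $W$ is finite dimensional over $\IQ$ (the statement does not say so), but this is handled automatically, since once we know every $w \in W$ expands with rational coefficients against a fixed finite tuple $w_1,\dots,w_n$, finite-dimensionality of $W$ drops out as a byproduct.
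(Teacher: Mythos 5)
Your proof is correct and follows essentially the same route as the paper: both pick elements of $W$ that form a $\IQ_\ell$-basis of $W_\ell$ (the paper packages them as a finite-dimensional subspace $N\subset W$ with $N\otimes\IQ_\ell\sto W_\ell$) and then use the rationality and non-degeneracy of the pairing to conclude that this rational frame already spans $W$ over $\IQ$, forcing $\dim_\IQ W=\dim_{\IQ_\ell}W_\ell$. The only difference is presentational: you carry out the key step explicitly via an invertible rational Gram matrix, while the paper phrases the same point through injectivity of restriction maps on dual spaces.
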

\begin{proof}
Choose a section of the projection $W \to W/ (W \cap \ker(j_\ell))$. Then we obtain a finite dimensional subspace $N \subset W$ such that the restriction of $j_\ell$ to $N \tensor \IQ_\ell$ is an isomorphism. In other words, $N$ provides a $\IQ$-linear structure on $W$. Note that the pairing on $N$ which is inherited from $W$ is necessarily non-degenerate. Since $j_\ell$ is surjective, the map $W^\vee \to W_\ell^\vee = (N \tensor \IQ_\ell)^\vee$ is injective. As this map factors as $W^\vee \to N^\vee \to (N \tensor \IQ_\ell)^\vee$, $W^\vee \to N^\vee$ must also be injective, so that $N = W$.  
\end{proof}

\begin{set-up} Let $F \in \SO(V_\ell)$ be a semisimple element such that the set of eigenvalues of $F$ is of the form $\{ \alpha_1^\pm, \alpha_2^\pm, \cdots, \alpha_{r}^\pm \}$ for distinct $\alpha_i$'s in $\bar{\IQ}_\ell$ and none of $\alpha_i$'s is a root of unity. Let $E$ be a vector space over $\IQ$ embedded in $\End (V_\ell)$ such that $E$ is closed under multiplication and $q^{\tensor (1, 1)}$ restricts to a $\IQ$-bilinear pairing $E \times E \to \IQ$ on $E$. Let $I_\ell$ be the centralizer of $F$ in the reductive group $\SO(V_\ell)$ over $\IQ_\ell$. 
\end{set-up}
We first make some elementary observations: Let $W_i^\pm$ be the eigenspace of the eigenvalue $\alpha_i^\pm$. Then $\bar{V}_\ell := V_\ell \tensor \bar{\IQ}_\ell$ admits a decomposition $V_\ell = \oplus_{i = 1}^r (W_i^+ \oplus W_i^-)$. In particular, every $W_i$ (resp. $W_i^-$) is isotropic and only pairs nontrivially with $W_i^-$ (resp. $W_i$). More precisely, $q$ restricts to a perfect pairing $W_i^+ \times W_i^- \to \bar{\IQ}_\ell$ with which we can identify $W_i^+$ with $(W_i^-)^\vee$. There is a natural isomorphism
\begin{align}
\label{eqn: +- embedding}
    \GL(W_i^+) \iso \SO_F(W_i^+ \oplus W_i^-) \subset \End(W_i^+) \times \End(W_i^-)
\end{align}
given by $g \mapsto (g, (g^\vee)^{-1})$. 

\begin{theorem}
\label{thm: main lemma}
Suppose $I$ is a group over $\IQ$ together with a morphism $i: I(\IQ) \to E$ of sets and a morphism $i_\ell:  I \tensor \IQ_\ell \to I_\ell$ of algebraic groups such that the diagram 
\begin{center}
    \begin{tikzcd}
    I(\IQ) \arrow{r}{} \arrow{d}{i} & I(\IQ_\ell) \arrow{d}{i_\ell(\IQ_\ell)} \\
    E \arrow{r}{} \arrow{r}{} & \End V_\ell
    \end{tikzcd}
\end{center}
commutes. If $i(I(\IQ)) \backslash I(\IQ_\ell)$ is compact, then the following are equivalent: {\upshape{(a)}} $I$ is reductive. {\upshape{(b)}} The image of $i_\ell$ is $I_\ell$. {\upshape{(c)}} The image of $I(\IQ)$ in $\End V_\ell$ span $\End_F (V_\ell)$. 
\end{theorem}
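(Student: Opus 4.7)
The plan is to establish the implications $(a) \Rightarrow (b) \Rightarrow (c) \Rightarrow (b)$ (with the middle equivalence going both ways) and $(b) \Rightarrow (a)$, relying on a Borel--Tits type theorem and the structure of $I_\ell$. First I record the following structural preliminary: since the eigenvalues $\alpha_i^{\pm 1}$ of $F$ are distinct, the centralizer $I_\ell$ of $F$ in $\SO(V_\ell)$ is a connected torus of rank $r$, identified over $\bar{\IQ}_\ell$ via~\eqref{eqn: +- embedding} with $\prod_i \GL_1(W_i^+)$. The $2r$ weights through which $I_\ell$ acts on $V_\ell$ are all distinct, so the $\IQ_\ell$-linear span of $I_\ell(\bar{\IQ}_\ell)$ in $\End V_\ell \otimes \bar{\IQ}_\ell$ is exactly $\End_F V_\ell \otimes \bar{\IQ}_\ell$. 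For any proper Zariski-closed subgroup $H \subsetneq I_\ell$, some of these weights get identified upon restriction, so the linear span of $H(\bar{\IQ}_\ell)$ in $\End V_\ell$ is strictly smaller than $\End_F V_\ell$.

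For $(c) \Leftrightarrow (b)$: put $H := i_\ell(I \otimes \IQ_\ell) \subseteq I_\ell$, a Zariski-closed subgroup. Since $i(I(\IQ)) \subseteq H(\IQ_\ell)$, the span of $i(I(\IQ))$ is contained in the span of $H(\IQ_\ell)$, which sits inside $\End_F V_\ell$. Thus (c) forces the span of $H$ to be $\End_F V_\ell$, and by the preliminary this forces $H = I_\ell$, which is (b). Conversely, if $H = I_\ell$, then $i_\ell$ has image of finite index on $\IQ_\ell$-points, so the cocompactness hypothesis yields that $i(I(\IQ))$ is cocompact in $I_\ell(\IQ_\ell)$. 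By Borel density for the torus $I_\ell$, visible through the valuation map $I_\ell(\IQ_\ell) \to X_*(I_\ell)_{\bar{\IQ}_\ell}$ to the cocharacter lattice, $i(I(\IQ))$ is Zariski-dense in $I_\ell$; hence its $\IQ_\ell$-linear span in $\End V_\ell$ equals that of $I_\ell$, namely $\End_F V_\ell$.

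For $(a) \Rightarrow (b)$: if $I$ is reductive then $H$ is a reductive subtorus of $I_\ell$. Projecting the compact quotient $i(I(\IQ)) \backslash I(\IQ_\ell)$ through $i_\ell$ and then further quotienting by $H$ on the target yields compactness of $H(\IQ_\ell) \backslash I_\ell(\IQ_\ell)$. The theorem of Borel and Tits says that a closed subgroup of a reductive group over a local field with compact right-quotient must contain a parabolic subgroup of the ambient group; since the only parabolic of the torus $I_\ell$ is $I_\ell$ itself, this gives $H = I_\ell$. For $(b) \Rightarrow (a)$: if $I$ had a non-trivial unipotent radical $U$, necessarily $U \subseteq \ker(i_\ell)$, then the fibration $I(\IQ_\ell) \to (I/U)(\IQ_\ell)$ has fibers homeomorphic to $U(\IQ_\ell) \cong \IQ_\ell^{\dim U}$. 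Because the closed countable subgroups of $\IQ_\ell^n$ are trivial, the induced fibers of the arithmetic quotient are non-compact, contradicting compactness of $i(I(\IQ)) \backslash I(\IQ_\ell)$.

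The main obstacle I anticipate is applying the Borel--Tits theorem, specifically propagating the compactness of $i(I(\IQ)) \backslash I(\IQ_\ell)$ to a clean statement about $H(\IQ_\ell) \backslash I_\ell(\IQ_\ell)$; this needs careful handling of the (possibly finite) cokernel of $i_\ell$ on $\IQ_\ell$-points and of the distinction between algebraic and arithmetic subgroups. The fibration argument in $(b) \Rightarrow (a)$ also requires care to ensure the fibers are genuinely non-compact after quotienting.
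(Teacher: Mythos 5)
Your proposal diverges from the paper's proof, and the two implications you handle by pure structure theory of the torus rest on false lemmas. First, your structural preliminary is wrong: a proper Zariski-closed subgroup of $I_\ell$ need not identify any two of the $2r$ weights. For $r=2$, the one-parameter subgroup $H=\{\diag(t,t^{-1},t^{2},t^{-2}):t\in\bar{\IQ}_{\ell}^{\times}\}\subsetneq I_\ell\tensor\bar{\IQ}_\ell$ restricts the four weights to the pairwise distinct characters $t^{\pm1},t^{\pm2}$, so by linear independence of characters $H(\bar{\IQ}_\ell)$ spans all of $\End_F(V_\ell)\tensor\bar{\IQ}_\ell$ even though $H\neq I_\ell$. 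Since your proof of (c)$\Rightarrow$(b) uses only this lemma and no compactness, that implication is unproven; the paper obtains it from the Borel--Tits parabolicity input (a proper parabolic fixes a flag, hence its points cannot span). Second, in (b)$\Rightarrow$(c) the claim that a cocompact subgroup of an $\ell$-adic torus is Zariski dense is false: cocompactness costs nothing whenever the relevant quotient torus is anisotropic, e.g. $\IG_m(\IQ_\ell)=\IQ_\ell^\times$ is cocompact in $(\Res_{E/\IQ_\ell}\IG_m)(\IQ_\ell)=E^\times$ for $E/\IQ_\ell$ quadratic; worse, in the intended application $I_{\ft,\ell}$ is typically anisotropic, so $I_\ell(\IQ_\ell)$ is itself compact and even the trivial subgroup is cocompact, and your valuation-map heuristic sees nothing. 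The paper's (b)$\Rightarrow$(c) instead uses the characteristic-zero fact that $I(\IQ)$ is Zariski dense in the connected reductive $\IQ$-group $I$ and pushes density through the surjection $i_\ell$; that is the argument you need here.

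The other two directions also have gaps. In (a)$\Rightarrow$(b) the compactness transfer is vacuous: since $i_\ell(I(\IQ_\ell))\subseteq H(\IQ_\ell)$, the map you induce from $i(I(\IQ))\backslash I(\IQ_\ell)$ lands in a single coset of $H(\IQ_\ell)\backslash I_\ell(\IQ_\ell)$ and gives no compactness of the latter. The hypothesis actually used by the paper (and supplied by Thm~\ref{thm: fin}) is compactness of the quotient of the big group $I_\ell(\IQ_\ell)$ by the image of $I(\IQ)$; with that reading no transfer is needed, but then your argument never invokes reductivity of $I$, whereas the paper's (a)$\Rightarrow$(b) uses it essentially (parabolicity plus reductivity plus connectedness of $I_\ell$ force surjectivity). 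This is a warning sign that ``compact quotient implies containing a parabolic'' cannot be applied as bluntly as you do, since such a statement fails for anisotropic ambient groups. Finally, your (b)$\Rightarrow$(a) is incorrect: $U(\IQ)$ is dense and in fact cocompact in $U(\IQ_\ell)$ (already $\IQ+\IZ_\ell=\IQ_\ell$), so a unipotent radical inside $\ker i_\ell$ creates no non-compactness; the observation that closed countable subgroups of $\IQ_\ell^{n}$ are trivial is irrelevant because the subgroup in question is not closed. (The paper's one-line (b)$\Rightarrow$(a) implicitly relies on $i_\ell$ having no unipotent kernel, as holds in the application.) Note also that the paper's proof does not assume the eigenspaces $W_i^{\pm}$ are lines---it argues with parabolic subgroups of $\prod_i\GL(W_i^{+})$---so your torus-only treatment is in any case narrower than the stated Set-up.
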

\begin{proof}
We first remark that by \cite[Prop.~9.3]{BT}, the compactness of $I(\IQ) \backslash I(\IQ_\ell)$ ensures that $I \tensor \IQ_\ell$ is a parabolic subgroup of $I_\ell$. 

(a) $\Rightarrow$ (b) The quotient $Q := I \tensor \IQ_\ell \backslash I_\ell$ is proper. Since $I \tensor \IQ_\ell$ is reductive, $Q$ must also be affine. The connectedness of $I_\ell$ implies that $Q$ is just a point. 

(b) $\Rightarrow$ (c): Since $I$ is reductive, $I(\IQ)$ is Zariski dense in $I_\ell$. Therefore, it suffices to show that $I(\bar{\IQ}_\ell)$ spans $\End_F(V_\ell)$. This follows easily from the observation that $I_\ell \tensor \bar{\IQ}_\ell = \prod_{i = 1}^r \GL(W_i^+)$ under  (\ref{eqn: +- embedding}). Indeed, it suffices to check that for any vector space $W$ over any field, the image of $\GL(W)$ in $\End(W) \times \End(W)$ under the embedding $g \mapsto (g, g^{-1})$ spans the entire space. 

(c) $\Rightarrow$ (a): Under the isomorphism $I_\ell \tensor \bar{\IQ}_\ell = \prod_{i = 1}^r \GL(W_i^+)$, $I \tensor \bar{\IQ}_\ell$ takes the form $\prod_{i = 1}^r G_i$, where $G_i$ is a parabolic subgroup of $\GL(W_i^+)$. Suppose for some $i$, $G_i$ is a proper subgroup of $\GL(W_i)$. Then $G_i$ has to fix some nontrivial flag in $W_i$, so that $G_i(\bar{\IQ}_\ell) \subset \End(W_i)$ cannot possibly span $\End(W_i)$. This contradicts (c). Therefore, we must have (b), which implies (a). 
\end{proof}

\subsection{Finiteness and Tate Conjecture}
\label{sec: finiteness and Tate}
Let $X$ be a polarized K3 surface over $\IF_q$. Let $F$ be the Frobenius action on $\H^2_\ell(X)(1)$ and assume that $F \in \SO(\H^2_\ell(X)_\IQ(1))$, and the $F$-invariants of $\H^2_\ell(X)_\IQ(1)$ are the same as $F^2$-invariants. Let $\fa$ and $\ft$ denote the algebraic part and the transcendental part of $\fh^2(X)$ respectively. Recall that by the Tate conjecture for K3 surfaces (\cite[Thm~1]{Keerthi}), $\H^2_\ell(X)_\IQ(1)^F$ can be identified with $\NS(X) \tensor \IQ_\ell$. Let $T_\ell(X)$ denote the orthogonal complemenet of $\NS(X)\tensor \IQ_\ell$. Let $I_\ell$ (resp. $I_{\ft, \ell}$) denote the centralizer of $F$ in $\SO(\H^2_\ell(X)_\IQ(1))$ (resp. $\SO(T_\ell(X)_\IQ)$). Note that $I_\ell$ canonically decomposes as $I_{\fa, \ell} \times I_{\ft, \ell}$, where $I_{\fa, \ell} = \SO(\H^2_\ell(X)_\IQ(1)^F)$. Define $I_{\ft, \ell}$ to be the functor which sends every $\IQ$-algebra $R$ to the subgroup of $(\End(\ft) \tensor R)^\times$ which preserves the Poincar\'e pairing $\ft(1) \tensor \ft(1) \to \mathbf{1}$. Let $S^{\der, \ell} \subset \End(\fh^2(X))$ (resp. $S^{\der, \ell}_\ft \subset \End(\ft)$) be the subgroup of derived (resp. derived transcendental) $\ell$-isogenies. 

\begin{theorem}
\label{thm: fin}
The quotient $S^{\der, \ell}_\ft \backslash I_{\ft, \ell}(\IQ_\ell)$ is compact. 
\end{theorem}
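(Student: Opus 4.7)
The plan is to reduce the compactness of $S^{\der,\ell}_\ft \backslash I_{\ft,\ell}(\IQ_\ell)$ to the finiteness of isomorphism classes of K3 surfaces over $\IF_q$ that are derived $\ell$-isogenous to $X$, invoking the existence theorem (Thm~\ref{thm: existence}), Galois descent (Thm~\ref{thm: Gal descent}), and the Torelli theorem (Thm~\ref{thm: Torelli}) to translate between lattices and K3 surfaces.

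First I would fix the $F$-stable $\IZ_\ell$-lattice $\Lambda_\ft := T_\ell(X) \cap \H^2_\et(X_{\bar{\IF}_q}, \IZ_\ell(1))$ in $T_\ell(X)$, whose stabilizer $K_\ft \subset I_{\ft,\ell}(\IQ_\ell)$ is a compact open subgroup. Compactness of $S^{\der,\ell}_\ft \backslash I_{\ft,\ell}(\IQ_\ell)$ is then equivalent to finiteness of the double coset $S^{\der,\ell}_\ft \backslash I_{\ft,\ell}(\IQ_\ell) / K_\ft$, which is in bijection with the $S^{\der,\ell}_\ft$-orbits on the set $\Sigma$ of $F$-stable $\IZ_\ell$-lattices in $T_\ell(X)$ lying in the single $I_{\ft,\ell}(\IQ_\ell)$-orbit of $\Lambda_\ft$.

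For each $g \in I_{\ft,\ell}(\IQ_\ell)$, I would extend $g$ by the identity on $\NS(X) \otimes \IQ_\ell$ to an $F$-commuting isometry $\hat g$ of $\H^2_\et(X_{\bar{\IF}_q},\IQ_\ell(1))$; the decomposition of $\H^2_\et$ into algebraic and transcendental parts is $F$-stable, so this is legitimate. Then $\hat g(\H^2_\et(X_{\bar{\IF}_q},\IZ_\ell(1)))$ is an $F$-stable full lattice abstractly isometric to the standard K3 lattice. Thm~\ref{thm: existence} over $\bar{\IF}_q$ then produces a K3 surface $\bar X'$ over $\bar{\IF}_q$ together with a derived $\ell$-isogeny $\bar f \colon \fh^2(\bar X') \sto \fh^2(X_{\bar{\IF}_q})$ realizing this lattice, and $F$-invariance allows Thm~\ref{thm: Gal descent} to descend $(\bar X', \bar f)$ to $(X', f)$ over $\IF_q$. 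I would then check via Thm~\ref{thm: Torelli} that the assignment $\Lambda = g \Lambda_\ft \mapsto [X'/\IF_q]$ descends to a well-defined injection from $S^{\der,\ell}_\ft \backslash \Sigma$ into isomorphism classes of K3 surfaces over $\IF_q$ admitting a derived $\ell$-isogeny to $X$: two lattices in the same $S^{\der,\ell}_\ft$-orbit give rise to a composite derived $\ell$-isogeny between the corresponding $X'$ and $X''$ which is integral on the transcendental lattice, so Torelli (after a $(-2)$-curve adjustment to handle polarizability, as in \cite[Lem.~6.2]{Yang}) produces an $\IF_q$-isomorphism; conversely an $\IF_q$-isomorphism between two such K3 surfaces composes with the derived $\ell$-isogenies into an element of $S^{\der,\ell}_\ft$ matching the lattices.

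Finally, the target set is finite: any $X'$ derived $\ell$-isogenous to $X$ has $\ell$-adic cohomology $F$-equivariantly isomorphic to that of $X$ over $\IQ_\ell$ and the same zeta function, hence polarizations of bounded degree, so the Lieblich-Maulik-Snowden finiteness theorem for polarized K3 surfaces of bounded degree over $\IF_q$ yields finitely many such $X'$ up to isomorphism. The main obstacle will be the injectivity of the assignment above: one must precisely align the ambiguity in the descent of $(\bar X', \bar f)$ with the $S^{\der,\ell}_\ft$-action, and invoke Torelli in a form adapted to non-polarizable derived $\ell$-isogenies by reducing to the polarizable case via $(-2)$-curve reflections.
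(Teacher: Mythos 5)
Your proposal is correct and follows essentially the same route as the paper: reduce compactness to finiteness of a double coset modulo a compact open lattice stabilizer, identify the double cosets with isomorphism classes of K3 surfaces over $\IF_q$ admitting a derived $\ell$-isogeny to $X$ via Thm~\ref{thm: existence}, Thm~\ref{thm: Gal descent} and Thm~\ref{thm: Torelli}, and conclude from the finiteness of K3 surfaces over $\IF_q$ (the paper cites \cite[Cor.~2]{Keerthi} directly, whereas you re-derive it via bounded polarization degree and \cite{LMS}). The only cosmetic difference is that the paper works with the full group $I_\ell = I_{\fa,\ell}\times I_{\ft,\ell}$ and projects to the transcendental factor at the end, while you extend elements of $I_{\ft,\ell}(\IQ_\ell)$ by the identity on $\NS(X)\otimes\IQ_\ell$.
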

\begin{proof}
Note that $I_{\ell} = I_{\fa, \ell} \times I_{\ft, \ell}$. Let $p_\fa(\sfK)$ and $p_\ft(\sfK)$ be the projections of the $I_{\fa, \ell}(\IQ_\ell) \times I_{\ft, \ell}(\IQ_\ell)$ repsectively. Then $\sfK \subseteq p_\fa(\sfK) \times p_\ft(\sfK)$ is a subgroup of finite index. Therefore, it suffices to show that the double quotient $S^{\der, \ell} \backslash I_{\ell}(\IQ_\ell)/ \sfK$ is finite. 

By combining Thm~\ref{thm: existence} and Thm~\ref{thm: Gal descent}, one quickly sees that the elements in the double quotient $S^{\der, \ell} \backslash I_{\ell}(\IQ_\ell)/ \sfK$ corresponds bijectively to the set of isomorphism class of K3 surfaces $X'$ over $\IF_q$ which admits a derived $\ell$-isogeny $f: \fh^2(X') \to \fh^2(X)$. Thanks to \cite[Cor.~2]{Keerthi}, which is a corollary to \cite[Main Theorem]{LMS} and the Tate conjecture for K3's \cite[Thm~1]{Keerthi}, there are only finitely isomorphism classes of K3 surfaces over $\IF_q$. Therefore, $S^{\der, \ell} \backslash I_{\ell}(\IQ_\ell) / \sfK$ is finite. 
\end{proof}

\begin{corollary}
\label{cor: conditional}
Suppose there is a reductive group $I$ over $\IQ$ which is equipped with (set-theoretic) maps $S^{\der, \ell}_\ft \stackrel{j}{\to} I(\IQ) \stackrel{i}{\to} \End(\ft)$ and a morphism $i_\ell: I \to I_{\ft, \ell}$ such that $i_\ell \circ j = \w_\ell$ and $\w_\ell \circ i = i_\ell$. Then $\End_F T_\ell(X)_\IQ$ is spanned by $\w_\ell(i(I(\IQ))$, and $\w_\ell : I_{\ft} \tensor \IQ_\ell \to I_{\ft, \ell}$ is an isomorphism. In particular, Thm~\ref{thm: main} and~\ref{thm: group main} hold for $X$. 
\end{corollary}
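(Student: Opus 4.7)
The plan is to apply the main lemma (Theorem~\ref{thm: main lemma}) directly to the hypothesized data. Take $V_\ell := T_\ell(X)$ with its Poincar\'e pairing and geometric Frobenius $F$; the semisimplicity of Frobenius together with the assumption that $F^n$- and $F$-invariants on $\H^2_\ell(X)_\IQ(1)$ agree (arranged by enlarging $\IF_q$) puts us in the setup of \S\ref{sec: Linear Algebra}. Take $E := \omega_\ell(\End(\ft)) \subset \End T_\ell(X)$, a $\IQ$-subspace closed under multiplication, on which the motivic Poincar\'e pairing on $\ft$ induces a $\IQ$-valued bilinear form compatible with $q^{\otimes(1,1)}$. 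The data $(I, j, i, i_\ell)$ then slot into the framework of Theorem~\ref{thm: main lemma}.

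For the compactness hypothesis, the relation $\omega_\ell = i_\ell \circ j$ gives
$\omega_\ell(S^{\der,\ell}_\ft) \subseteq i_\ell(I(\IQ)) \subseteq i_\ell(I(\IQ_\ell))$
inside $I_{\ft, \ell}(\IQ_\ell)$. Theorem~\ref{thm: fin} says the left-most quotient $\omega_\ell(S^{\der,\ell}_\ft) \backslash I_{\ft, \ell}(\IQ_\ell)$ is compact, hence so is the coarser quotient $i_\ell(I(\IQ)) \backslash I_{\ft, \ell}(\IQ_\ell)$; since $i_\ell(I(\IQ_\ell))$ is closed in $I_{\ft, \ell}(\IQ_\ell)$ (image of a morphism of algebraic $\IQ_\ell$-groups), its image $i_\ell(I(\IQ)) \backslash i_\ell(I(\IQ_\ell))$ in that compact quotient is closed and hence compact, which is what Theorem~\ref{thm: main lemma} needs.

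Since $I$ is reductive by hypothesis, Theorem~\ref{thm: main lemma} now yields both (b) $i_\ell(I \otimes \IQ_\ell) = I_{\ft,\ell}$ and (c) $\omega_\ell(i(I(\IQ)))$ $\IQ_\ell$-spans $\End_F T_\ell(X)$. Conclusion (c) is the first assertion of the corollary and forces surjectivity of $\omega_\ell : \End(\ft) \otimes \IQ_\ell \to \End_F T_\ell(X)$. The non-degeneracy of the $\IQ$-valued trace pairing on $\End(\ft)$ shows $\omega_\ell$ is already $\IQ$-linearly injective on $\End(\ft)$, so Lemma~\ref{lem: pairing repellent} (applied to this $W \hookrightarrow W_\ell$) supplies injectivity of the extension, and $\omega_\ell$ is an isomorphism on endomorphism algebras. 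The functor $I_\ft$, representable as the pairing-stabilizer inside $\U(\End(\ft))$, is then identified with $I_{\ft,\ell}$ after $\otimes \IQ_\ell$; reductivity descends from $\IQ_\ell$ to $\IQ$, so $I_\ft$ itself is reductive, giving the second assertion.

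For Theorems~\ref{thm: main} and~\ref{thm: group main}, combine the transcendental result with the divisorial Tate conjecture (yielding $\omega_\ell : \End(\fa) \otimes \IQ_\ell \iso \End_F(\NS(X) \otimes \IQ_\ell)$) and the trivial $\fh^0, \fh^4$ pieces to obtain Theorem~\ref{thm: main}; after further enlarging $\IF_q$ so $F$ acts trivially on $\NS(X) \otimes \IQ_\ell$, $I_X$ decomposes as $I_\fa \times I_\ft$ with $I_\fa = \O(\NS(X)_\IQ)$ reductive, and $\omega_\ell$ identifies this with $I_{X, \ell} = I_{\fa, \ell} \times I_{\ft, \ell}$, yielding Theorem~\ref{thm: group main}. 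The main bookkeeping subtlety, rather than any deep obstacle, is checking that $i(I(\IQ))$ actually lands in $I_\ft(\IQ)$ and not merely in $\End(\ft)^\times$; this follows from $\omega_\ell \circ i = i_\ell$ (so the images preserve the $\ell$-adic pairing) combined with the injectivity of $\omega_\ell$ on $\End(\ft) \otimes \IQ_\ell$ just established, which lets us transfer the pairing-preservation back to the motivic side.
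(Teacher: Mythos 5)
Your proposal is correct and follows essentially the same route as the paper: the spanning statement comes from Thm~\ref{thm: main lemma} combined with Thm~\ref{thm: fin} (your verification of the compactness hypothesis via $i_\ell\circ j=\w_\ell$ is exactly the step the paper leaves implicit), and the isomorphism $I_\ft\tensor\IQ_\ell\iso I_{\ft,\ell}$ is extracted from Lem.~\ref{lem: pairing repellent}, with your direct identification of $I_\ft$ as the pairing-stabilizer in the unit group of $\End(\ft)$ playing the role of the paper's appeal to Lem.~\ref{lem: faithfully flat}; the added deduction of Thm~\ref{thm: main} and~\ref{thm: group main} from the transcendental case is the routine reduction sketched in the introduction. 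The one caveat is your assertion that injectivity of $\w_\ell$ on $\End(\ft)$ follows from ``non-degeneracy of the trace pairing on $\End(\ft)$'' --- that non-degeneracy is not available at this stage (it is essentially what the standard conjectures would supply) --- but the paper's own proof makes the same silent identification of $\End(\ft)$ with its $\ell$-adic image $E$, so this is a shared elision rather than a divergence from the paper's argument.
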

\begin{proof}
The first statement is a direct consequence of Thm~\ref{thm: main lemma} and Thm~\ref{thm: fin}. For the second, we first apply Lem.~\ref{lem: pairing repellent} to $W_\ell = \End_F T_\ell(X)_\IQ$ and $W = E$ to obtain the finite-dimensionality of $\End(\ft)$ and the injectivity of $E \tensor \IQ_\ell \to \End_F T_\ell(X)_\IQ$. Then we apply Lem.~\ref{lem: faithfully flat} to $V_\ell = T_\ell(X)_\IQ$ and $E = \End(\ft)$ to conclude that the morphism $I_\ft \tensor \IQ_\ell \to I_{\ft, \ell}$ is injective.  
\end{proof}

\begin{remark}
\label{rmk: conditional}
If we know that (A) the span $\IQ \<S^{\der, \ell}\>$ in $E$ is finite dimensional, and (B) the Poincar\'e pairing on $p_\ft(\IQ\< S^{\der, \ell} \>)$ is negative definite, as predicted by the standard conjectures, then we can simply take $I$ to be the subgroup $I'_{\ft} := \U(\IQ\< S^{\der, \ell}\>) \cap I_{\ft}$ of $I_{\ft}$. Indeed, using (\ref{eqn: trace formula}) one quickly checks that $I'_\ft$ embeds into $\O(p_\ft(\IQ\< S^{\der, \ell}\>))$ via right multiplication. This implies that $I_\ft(\IR)$ is compact, so that $I'_\ft$ is reductive. In fact, in this case, we know that $\End_F T_\ell(X)_\IQ$ is spanned by $S^{\der, \ell}_\ft$. 
\end{remark}

\begin{example}
\label{exp: Kummer} Suppose $X$ comes from the Kummer construction, i.e., $X$ is the desingularization of the quotient $A / A[2]$ for some abelian surface $A$. Set $\wt{A} := X \times_{A/A[2]} A$. Then we have a diagram 
\begin{center}
    \begin{tikzcd}
    & \wt{A}\arrow{dl}{} \arrow{dr}{} & \\
    X & & A
    \end{tikzcd}.
\end{center}
This correspondence induces an isomorphism $\Psi :  \fh^2(A) \oplus (\bL^{\oplus 16}) \iso \fh^2(X)$, where each $\bL$ corresponds to an exceptional divisor, which has self-intersection number $-2$. If $q_X : \fh^2(X)(1)^{\tensor 2} \to \mathbf{1}$ and $q_A: \fh^2(A)(1)^{\tensor 2} \to \mathbf{1}$ denote the Poincar\'e pairings, then $q_X \circ \Psi^{\tensor 2} = 2 q_A$. It is easy to see, using \cite[Thm~1.1]{Ancona} and \cite[Thm~1]{Clozel}, that $X$ satisfies (A) and (B) in Rmk~\ref{rmk: conditional}. 
\end{example}

\begin{example}
\label{exp: ordinary}
Suppose $X$ is an ordinary K3 surface. Then by Thm~\ref{thm: lift} elements in $S^{\der, \ell}$ all lift to the canonical lifting of $X$. Therefore, it follows from Hodge theory that $X$ satisfies (A) and (B) in Rmk~\ref{rmk: conditional}. We remark that the Tate conjecture for any power of an ordinary K3 surface was proved by Zarhin in \cite{ZarhinOrd}. The main point is that the transcendental Tate classes are all spanned by graphs of powers of the Frobenius endomorphism. The $q^{-1}$-multiple of the graph of the $\IF_q$-linear Frobenius endomorphism is indeed an isogeny. However, it cannot be a derived $\ell$-isogeny as long as $X$ is not supersingular. Therefore, for squares of ordinary K3's, our proof yields a different explanation for algebraic cycles spanning the transcendental classes such that the method of constructing these cycles works for arbitrary fields. 
\end{example}

\section{Uncondtional Proof}

\subsection{Kuga-Satake Construction}

Let $\shM_{2d}^\circ$ denote the moduli stack of polarized K3 surface of degree $2d$ over $\IZ_{(p)}$. Let $L_d$ denote the lattice defined in \cite[(3.10)]{Keerthi}. Let $\cS(L_d)$ (resp. $\wt{\cS}(L_d)$) denote the canonical integral model of the associated orthogonal (resp. spinor) Shimura variety. There are sheaves $\bL_B, \bL_{\dR}, \bL_\ell, \bL_p, \bL_{\cris}$ defined on $\cS(L_d)(\IC), \cS(L_d),\cS(L_d), \cS(L_d)_{\IQ}, \cS(L_d)_{\IF_p}$ respectively, with respect to the understood Grothendieck topology (\cite[\S4.1]{Keerthi}). 

\begin{theorem}
There exists an \'etale morphism $\rho : \wt{\shM}_{2d}^\circ \to \cS(L_d)$ such that for every algebraically closed field $\kappa$, $u \in \wt{\shM}_{2d}^\circ(\kappa)$ and $t := \rho(u)$, the following holds: If $(X, \xi)$ is the fiber of the universal family over $u$, then there are canonical isometry $\alpha_* : \bL_{*, t}(-1) \sto \PH^2_*(X)$ for $* = B, \dR, \ell, p, \cris$ whenever the cohomology theory $*$ is applicable. 
\end{theorem}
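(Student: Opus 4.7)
The plan is to invoke the Kuga--Satake construction in the form developed by Madapusi--Pera, and to spell out how $\wt{\shM}_{2d}^\circ$ arises. Let $\shM_{2d}^\circ \subset \shM_{2d}$ be the open locus where the polarization is primitive and the residual characteristic is good (so the ordinary moduli-theoretic issues in small $p$ do not arise; here $p \ge 5$ is implicit). First I would take $\wt{\shM}_{2d}^\circ$ to be a connected étale cover of $\shM_{2d}^\circ$ trivializing enough data to make the period map land in $\cS(L_d)$ rather than in an arithmetic quotient: concretely, a choice of spin-orientation on the primitive lattice $\PH^2_B$ together with a rigidification of the discriminant form. This is a fine moduli problem, and the forgetful map $\wt{\shM}_{2d}^\circ \to \shM_{2d}^\circ$ is finite étale.

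Next, the morphism $\rho$ is constructed in two stages. Over $\IC$, for $(X, \xi)$ polarized of degree $2d$, the primitive weight-two Hodge structure $\PH^2(X, \IZ)$ with its spin rigidification is exactly the datum of a point of $\cS(L_d)(\IC)$; this is the classical Kuga--Satake period map. To descend to $\IZ_{(p)}$ one uses the existence of the integral canonical model of $\cS(L_d)$ (Kisin, Vasiu) and the extension theorem of Madapusi-Pera, which shows that the $\IC$-analytic period map extends to a morphism of stacks over $\IZ_{(p)}$.

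Étaleness of $\rho$ I would verify by comparing tangent spaces at an arbitrary geometric point $u$. Both source and target are smooth of relative dimension $19$ over $\IZ_{(p)}$, so it suffices to show the derivative is an isomorphism. Via the Kodaira--Spencer map on the K3 side and the canonical filtration on $\bL_{\dR}$ on the Shimura side, the differential of $\rho$ is identified with the natural map
$$ T_u \wt{\shM}_{2d}^\circ \sto \Hom\bigl(\Fil^2 \PH^2_{\dR}(X),\ \PH^2_{\dR}(X)/\Fil^2\bigr), $$
which is the classical Griffiths transversality isomorphism.

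Finally, the comparison isometries $\alpha_*$ are built into the construction of the sheaves $\bL_*$: over $\IC$ the identification $\alpha_B$ is the defining property of the period map; for $* = \ell$ and $* = \dR$ on the generic fiber, it comes from relative étale (resp. de Rham) cohomology and the comparison with the Kuga--Satake abelian scheme; the $p$-adic $\alpha_p$ and the crystalline $\alpha_\cris$ are constructed in \cite{Keerthi} using the crystalline realization of the Kuga--Satake functor. The hard step, and the one I would not attempt to reprove, is precisely the crystalline comparison $\alpha_\cris$ in characteristic $p$, which rests on Madapusi-Pera's analysis of the CSpin-structure on the Kuga--Satake abelian scheme over $\cS(L_d)_{\IF_p}$. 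All the other cohomological comparisons reduce to this one by standard compatibility statements, so I would finish by citing the relevant theorems of \cite{Keerthi} verbatim.
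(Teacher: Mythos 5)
The paper does not actually prove this statement: it is imported wholesale from Madapusi Pera's work \cite{Keerthi}, where the cover $\wt{\shM}_{2d}^\circ$, the period morphism $\rho$, its \'etaleness, and the comparison isometries $\alpha_*$ are constructed. Your proposal retraces that construction rather than citing it outright, which is legitimate, but the sketch has three weak points. First, the cover is misidentified: in \cite{Keerthi}, and later in this paper (the points of $\wt{\shM}_{2d}^\circ$ are tuples $(X,\xi,\epsilon)$), $\wt{\shM}_{2d}^\circ$ is the $2$-fold \'etale cover given by a trivialization $\epsilon : \det(L_d \tensor \IZ_2) \sto \det(\PH^2_2(X))$, not a ``spin-orientation together with a rigidification of the discriminant form''; this specific datum is what resolves the $\SO$ versus $\O$ ambiguity and lets the period map land in $\cS(L_d)$ itself. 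Second, before the extension property of the integral canonical model can be invoked you need the period map to exist over $\IQ$ (not merely $\IC$-analytically); this descent, due to Rizov and redone in \cite{Keerthi}, is a genuine intermediate step that your outline skips.

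Third, and most seriously, \'etaleness of $\rho$ cannot be reduced to ``classical Griffiths transversality'': that argument only controls the generic fibre. The content of the theorem is \'etaleness at characteristic-$p$ points, where identifying the differential requires comparing the formal deformation space of the K3 surface, described via its crystalline cohomology (Ogus' theory of K3 crystals), with the deformation theory of the Kuga--Satake abelian scheme together with its tensors (Grothendieck--Messing/Faltings, as organized by Kisin and Madapusi Pera). This comparison is intertwined with the construction of $\alpha_\cris$ --- precisely the step you set aside as the one you would not reprove --- so the \'etaleness should either be cited along with it or argued at that level of detail. A smaller slip: the tangent space of the period domain side is $\Hom\bigl(\Fil^2 \PH^2_\dR(X), \Fil^1 \PH^2_\dR(X)/\Fil^2\bigr)$, which is $19$-dimensional because the isotropy condition on $\Fil^2$ must be preserved; your formula $\Hom\bigl(\Fil^2, \PH^2_\dR/\Fil^2\bigr)$ is $20$-dimensional and cannot match the $19$-dimensional source. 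If your intention is, like the paper's, simply to quote \cite{Keerthi} for this theorem, say so; but then do not present the characteristic-$p$ \'etaleness as an elementary tangent-space computation.
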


If $p \mid d$, then $L_{d}$ is not self-dual at $p$. Nonetheless, by \cite[\S6.8]{CSpin} we can find a quadratic lattice $\wt{L}$ which is self-dual at $p$ and of signature $(r, 2)$ for some $r > 19$, together with an isometric embedding $L_d \into \wt{L}$ onto an orthogonal direct summand. This gives rise to maps $\cS(L_d) \to \cS(\wt{L})$. Set $N := (L_d)^\perp \subset \wt{L}$. If $p \nmid d$, then we just set $\wt{L} = L_d$. We additionally consider $\wt{\cS}(\wt{L})$, which is equipped with a polarized abelian scheme $\wt{\shA}$, and sheaves $\wt{\bL}_B, \wt{\bL}_\dR, \wt{\bL}_\ell, \wt{\bL}_{p}, \wt{\bL}_\cris$. To simplify notation, we view each scheme $T$ over $\cS(L_d)$ simultaneously as a scheme over $\cS(\wt{L})$. Then there is a canonical embedding $N \subset \LEnd(\wt{\shA}_T)$ together with compatible isometries $\bL_{*, T} \sto N^\perp \subset \wt{\bL}_{*, T}$ ($*= B, \ell, \dR, p, \cris$ whenever applicable) \cite[Prop.~4.9]{Keerthi}. 

\paragraph{CSpin-structures} Let $\wt{H}$ denote $\Cl(\wt{L})$, viewed as a $\Cl(\wt{L})$-bimodule. Equip $\End(\wt{H}_\IQ)$ with a pairing by $(\alpha, \beta) \mapsto 2^{r + 2} \tr(\alpha \circ \beta)$. Then the map $\wt{L}_\IQ \to \End(\wt{H}_\IQ)$ given by left multiplication is an isometric embedding. Denote by $\pi'$ the idempotent projector from $\End(H'_\IQ)$ onto $\wt{L}_\IQ$. The group $\CSpin(\wt{L}_{\IQ})$ is by definition the stabilizer in $\GL(\wt{H}_\IQ)$ of the $\IZ/2\IZ$-grading, right $\Cl(\wt{L})$-action, and the tensor $\pi'$. We can endow a nondegenerate symplectic pairing $\Psi : \wt{H} \times \wt{H} \to \IZ$ such that the morphism $\CSpin(\wt{L}_\IQ) \to \GL(\wt{H}_\IQ)$ factors through $\GSp(\wt{H}_\IQ, \Psi)$. This gives an embedding of $\wt{\cS}(L_d)$ into a Siegel Shimura variety, from which it inherits a weakly polarized universal abelian scheme $(\wt{\shA}, \lambda')$. Moreover, $\wt{\shA}$ is equipped with a ``CSpin-structure'': a $\IZ/2\IZ$-grading, a left $\Cl(\wt{L})$-action and various realizations of $\pi$. More precisely, for each algebraically closed field$\kappa$, and $\kappa$-point $t$ on $\cS(\wt{L})$ with lift $s$ on $\wt{\cS}(\wt{L})$, there exists an idempotent projector $\wt{\bpi}_{*, s}$ on $\End(\H^1_*(\wt{\shA}_s))$ whose image is canonically identified with $\wt{\bL}_{*, t}$ for $* = B, \dR, \ell, p, \cris$ whenever the cohomology $*$ is applicable. These tensors are compatible in a natural way: If $\mathrm{char\,} \kappa = 0$, the tuple $(\wt{\bpi}_{\dR, s}, \wt{\bpi}_{\what{\IZ}, s})$ is absolute Hodge. In particular, if $\kappa = \IC$, $(\wt{\bpi}_{\dR, s}, \wt{\bpi}_{\what{\IZ}, s})$ comes from the Betti tensor $\pi_{B, s}$. If $\mathrm{char\,} \kappa = p$, $K$ is a finite extension of $W(\kappa)[1/p]$, and $s_K, t_K$ are $K$-points which specialize to $s, t$ respectively, $\wt{\bpi}_{\what{\IZ}^p, s_{\bar{K}}}$ and $\wt{\bpi}_{\what{\IZ}^p, s}$ are compatible via the smooth and proper base change theorem and $\wt{\bpi}_{\cris, s}$ and $\wt{\bpi}_{\dR, s_K}$ are compatible via the Berthelot-Ogus isomorphism $\H^1_\cris(\wt{\shA}_{s_K}) \tensor K \iso \H^1_\dR(\wt{\shA}_{s_K})$. 

Denote the composition $ \wt{\shM}_{2d}^{\circ} \stackrel{\rho}{\to} \cS(L_d) \to \cS(\wt{L})$ by $\rho'$.

\begin{lemma}
\label{lem: filtration}
Let $\kappa$ be a field. Let $t \in \wt{\cS}(\wt{L})(k)$ be a point and $s \in \cS(\wt{L})(k)$ be a lift of the image of $t$. 
\begin{enumerate}[label=\upshape{(\alph*)}]
    \item $\Fil^1 \H^1_\dR(\shA_{s}) = \ker(x)$ for any generator $x$ of the one-dimensional subspace $\Fil^1 \wt{\bL}_{\dR, s}$. Under the inclusion $\bL_\dR \subset \wt{\bL}_\dR$ we have that $\Fil^1 \bL_\dR = \Fil^1 \wt{\bL}_\dR$. 
    \item Let $R$ be any $\IQ_\ell$-algebra. If $\psi_\ell \in \GL(\H^1_\ell(\wt{\shA}_{s}) \tensor R)$ is an element which preserves the $\IZ/2 \IZ$-grading, $\Cl(\wt{L})$-action and $\wt{\bpi}_{\ell}$, then $\psi_\ell$ preserves the symplectic pairing on $\H^1_\ell(\wt{\shA}_{s}) \tensor R$ induced by the weak polarization $\lambda'_s$, which is well defined up to a multiple of $R^\times$. 
\end{enumerate}
\end{lemma}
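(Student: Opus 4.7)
The plan is to reduce both parts of the lemma to standard facts about the Kuga--Satake construction. For part (a), I will first verify the statement in the case $\kappa=\IC$ using the classical Hodge-theoretic description of Kuga--Satake, and then bootstrap to arbitrary $\kappa$ using the compatibility of $\wt{\bpi}_{\dR}$ with $\wt{\bpi}_{\cris}$ via the Berthelot--Ogus isomorphism, together with the horizontality of the Hodge filtration across the Shimura variety over $\IZ_{(p)}$. For part (b), I will identify the pointwise stabilizer of the CSpin-structure in $\GL(\H^1_\ell(\wt{\shA}_s)\tensor R)$ with $\CSpin(\wt{L})(R)$ and then invoke the factorization $\CSpin(\wt{L}_\IQ)\to\GSp(\wt{H}_\IQ,\Psi)$ recalled in the setup paragraph.

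In detail for (a) over $\IC$: the line $\Fil^1\wt{\bL}_{\dR,s}$ is spanned by the period class, an element $\sigma\in\wt{L}_\IC$ that is isotropic for the quadratic form. Under Clifford left multiplication $\wt{L}_\IC\hookrightarrow\End(\wt{H}_\IC)$, isotropy forces $\sigma^2=0$, so $\mathrm{im}(\sigma)\subseteq\ker(\sigma)$. The classical construction defines $\Fil^1\H^1_\dR(\wt{\shA}_s) = \Fil^1\wt{H}_\IC$ to be $\sigma\cdot\wt{H}_\IC$, and a dimension count -- either via a hyperbolic pair $\sigma,\tau$ in $\wt{L}_\IC$ with $\sigma\tau+\tau\sigma=1$ giving complementary idempotents $\sigma\tau$ and $\tau\sigma$ in $\Cl(\wt L)_\IC$, or via the structure of the spin representation -- yields $\sigma\cdot\wt{H}_\IC=\ker(\sigma)$. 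To extend to arbitrary $\kappa$, in characteristic $p$ I will lift $s$ to a characteristic zero point of $\wt{\cS}(\wt L)$ (using smoothness over $\IZ_{(p)}$), apply the result at that lift, and transfer back via Berthelot--Ogus and the compatibility of $\wt{\bpi}$ across cohomology theories; in characteristic zero but for non-algebraically closed $\kappa$, an \'etale base change from a chosen $\IC$-point suffices. For the subsidiary equality $\Fil^1\bL_\dR=\Fil^1\wt{\bL}_\dR$: the complement $N=\bL^\perp\subset\wt{L}$ is purely of Hodge type $(0,0)$ by construction (it records the auxiliary orthogonal summand added to make $\wt L$ self-dual at $p$), so $\Fil^1 N_\dR=0$, and the two one-dimensional filtered lines coincide.

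For part (b): a trivialization of the CSpin-structure at $s$ identifies the triple $\bigl(\H^1_\ell(\wt{\shA}_s)\tensor R,\text{grading},\Cl(\wt L)\text{-action},\wt{\bpi}_{\ell,s}\bigr)$ with $\bigl(\wt{H}\tensor R,\text{grading},\Cl(\wt L)\text{-action},\pi'\bigr)$. By the very definition of $\CSpin(\wt{L}_\IQ)$ as the stabilizer in $\GL(\wt{H}_\IQ)$ of precisely these three tensors, the subgroup of $\GL(\H^1_\ell(\wt{\shA}_s)\tensor R)$ preserving them equals $\CSpin(\wt L)(R)$ (after base change from $\IQ$ to $R$). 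The factorization $\CSpin(\wt{L}_\IQ)\to\GSp(\wt{H}_\IQ,\Psi)$ then forces $\psi_\ell$ to preserve $\Psi$ up to the similitude character, which lies in $R^\times$; this character is precisely the ambiguity in the weak polarization $\lambda'_s$.

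The main obstacle I expect is the propagation of part (a) from characteristic zero to characteristic $p$: this needs the de Rham/crystalline comparison for $\wt{\bpi}$ at lifts, as well as the fact that the Hodge cocharacter of the spinor Shimura datum is defined integrally on $\wt{\cS}(\wt L)$ so that $\Fil^1$ specializes correctly. Part (b), by contrast, is essentially tautological once the identification with $\CSpin$ is in place.
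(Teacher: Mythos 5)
Your part (b) and the characteristic-zero case of part (a) are fine and follow the same route as the paper, which however disposes of both ingredients by citation: the identity $\Fil^1 \H^1_\dR(\wt{\shA}_{s}) = \ker(x)$ is \cite[Prop.~4.7(v)]{CSpin}, and the existence of a trivialization $\wt{H}_{\IQ_\ell} \iso \H^1_\ell(\wt{\shA}_s)_{\IQ}$ matching the grading, the $\Cl(\wt{L})$-action, $\wt{\bpi}_\ell$ \emph{and} carrying $\Psi$ to the pairing induced by $\lambda'_s$ is \cite[Prop.~3.14]{CSpin}. In (b) you implicitly assume the last compatibility: matching the three tensors alone identifies your stabilizer with $\CSpin(\wt{L})(R)$ and gives preservation of (the image of) $\Psi$ up to similitude, but to conclude for the polarization pairing you must know that $\Psi$ and $\lambda'_s$ correspond under some such trivialization. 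This is not tautological at a point of the integral model (especially in characteristic $p$); it is exactly what the citation supplies. Once one such trivialization exists the argument does close, since any two tensor-compatible trivializations differ by an element of $\CSpin$, which preserves the line $R\Psi$.

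The genuine soft spot is your propagation of (a) to characteristic $p$. The Berthelot--Ogus isomorphism compares $\H^1_\cris$ of the special fibre with $\H^1_\dR$ of a lift; it does not determine the Hodge filtration on $\H^1_\dR(\wt{\shA}_s)$ of the special fibre itself, so ``Berthelot--Ogus plus compatibility of $\wt{\bpi}$'' does not transfer the equality $\Fil^1 = \ker(x)$ downstairs. The correct mechanism (and essentially the proof of the cited result) is to work with the filtered vector bundles over $\wt{\cS}(\wt{L})$ itself: the condition $x \cdot \Fil^1\H^1_\dR(\wt{\shA}) = 0$ is closed, holds on the generic fibre by your $\IC$-computation, and hence holds on the special fibre by flatness of $\wt{\cS}(\wt{L})$ over $\IZ_{(p)}$; equality then follows from a rank count, since left multiplication by a nonzero isotropic $x$ has kernel of exactly half rank in every fibre --- your own hyperbolic-pair computation gives this pointwise in any characteristic $\neq 2$ because $\wt{L}$ is self-dual at $p$. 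Either run this argument or simply cite \cite[Prop.~4.7(v)]{CSpin} as the paper does. Finally, for the equality $\Fil^1\bL_\dR = \Fil^1\wt{\bL}_\dR$, the phrase ``$N$ is purely of type $(0,0)$'' has no direct meaning over a general $\kappa$; the characteristic-free formulation is the paper's: $N \subseteq \Fil^0\wt{\bL}_\dR$ pairs to zero with $\Fil^1\wt{\bL}_\dR$, so $\Fil^1\wt{\bL}_\dR \subseteq N^\perp = \bL_\dR$, and then the two one-dimensional lines coincide as you say.
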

\begin{proof}
(a) The first statement follows from \cite[Prop.~4.7(v)]{CSpin}. The second statement follows from the fact that $N \subseteq \Fil^0 \wt{\bL}_{\dR}$ is perpendicular to $\Fil^1 \wt{\bL}_\dR$. 
(b) By \cite[Prop.~3.14]{CSpin}, there exists an isomorphism $\eta_\ell : H'_\IQ \sto \H^1_\ell(\shA_s)_\IQ$ which preserves the $\IZ/2 \IZ$-grading, $\Cl(\wt{L})$-action and sends $\pi$ to $\wt{\bpi}_{\ell}$ such that $\eta_\ell$ sends $\Psi$ to the pairing induced by the weak polarization $\lambda'_s$. Then we use the fact that the stablizer of the $\IZ/2 \IZ$-grading, $\Cl(\wt{L})$-action and sends $\pi$ in $\GL(H'_{\IQ_\ell})$ is $\CSpin(\wt{L}_{\IQ_\ell})$, which lies in $\GSp(H'_{\IQ_\ell}, \Psi)$. 
\end{proof}

\subsection{Unconditional Proof}

Let $k$ be a finite field over $\IF_p$. Let $u \in \wt{\shM}^\circ_{2d}(k)$ be a point given by the tuple $(X, \xi, \epsilon)$ for some choice of $\epsilon : \det(L_d \tensor \IZ_2) \sto \det(\PH^2_2(X))$. Set $t = \rho(u)$ and choose a lift $s \in \wt{\cS}(\wt{L})(k)$ of $t$. Up to replacing $k$ by a finite extension, we assume that all line bundles on $X_{\bar{k}}$ and all endomorphisms of $\wt{\shA}_{s, \bar{k}}$ are defined over $k$. Let $S_0^{\der, \ell}$ be the stabilizer of $\xi$ and $\det(\P^2_2(X))$ in $S^{\der, \ell}$. Since $\mathrm{rank\,} \Pic(X) \ge 2$, the restriction of $p_\ft$ to $S_0^{\der, \ell}$ has the same image as $S^{\der, \ell}$. Suppose that $X$ has finite height. 

\begin{lemma}
\label{lem: lift to CSpin-isogeny}
For every $f \in S_0^{\der, \ell}$, there exists a CSpin-isogeny $\psi : \wt{\shA}_s \to \wt{\shA}_s$ which fixes $N_s$ and induces the same action on $ \bL_{*, t}$ for $* = \cris, \what{\IZ}^p$ as $f$ via $\alpha_*$. Moreover, $\psi$ is unique up to a $\IQ^\times$-multiple. 
\end{lemma}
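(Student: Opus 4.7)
The plan is to lift $f$ to characteristic zero, produce the corresponding CSpin-isogeny of Kuga--Satake varieties there by combining the Clifford-algebra lift $\SO \to \CSpin$ with the algebraicity of Hodge morphisms between abelian varieties, and then specialize back. By linearity we may assume $f$ is primitive. Apply Thm~\ref{thm: lift} to obtain perfect liftings $X_W, X'_W$ of $X$ together with a derived $\ell$-isogeny $f_K : \fh^2(X_K) \sto \fh^2(X'_K)$ specializing to $f$. These induce lifts $s_K, s'_K \in \wt{\cS}(\wt{L})(W)$ of $s$ such that the associated Kuga--Satake abelian schemes $\wt{\shA}_{s_W}$ and $\wt{\shA}_{s'_W}$ both specialize to $\wt{\shA}_s$. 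Fix an embedding $W \hookrightarrow \IC$ and let $f_\IC$ be the induced derived $\ell$-isogeny $\fh^2(X_\IC) \to \fh^2(X'_\IC)$. Through $\alpha_B$, its action on $\bL_B$ is a $\IQ$-rational Hodge isometry $\bL_{B, t_\IC}(-1) \to \bL_{B, t'_\IC}(-1)$; extending by the identity on $N$ gives a $\IQ$-rational Hodge isometry $\sigma : \wt{\bL}_{B, s_\IC} \sto \wt{\bL}_{B, s'_\IC}$.

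Using the level structures to identify both sides with $\wt{L}_\IQ$, view $\sigma$ as an element of $\SO(\wt{L}_\IQ)$. By Cartan--Dieudonn\'e over $\IQ$, decompose $\sigma = s_{v_1} \cdots s_{v_{2k}}$ as a product of reflections in anisotropic $v_i \in \wt{L}_\IQ$, and set $\wt\sigma := v_1 \cdots v_{2k} \in \Cl^+(\wt{L})_\IQ$; this is a $\IQ$-rational CSpin-lift of $\sigma$, unique up to $\IQ^\times$. Left multiplication by $\wt\sigma$ on $\wt{H}_\IQ$ preserves the $\IZ/2\IZ$-grading, the right $\Cl(\wt{L})$-action, and the tensor $\pi'$, so defines an element of $\CSpin(\wt{L})(\IQ)$. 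By Lem~\ref{lem: filtration}(a), the Hodge filtrations $\Fil^1 \H^1_\dR(\wt{\shA}_{s_\IC}) = \ker(x_s)$ and $\Fil^1 \H^1_\dR(\wt{\shA}_{s'_\IC}) = \ker(x_{s'})$ are cut out by generators of the lines $\Fil^1 \wt{\bL}_{\dR, s_\IC}$ and $\Fil^1 \wt{\bL}_{\dR, s'_\IC}$; since $\sigma$ sends one to the other, the identity $x \cdot \wt\sigma = \wt\sigma \cdot \sigma^{-1}(x)$ in $\Cl(\wt{L})$ shows that $\wt\sigma$ carries $\ker(x_s)$ into $\ker(x_{s'})$. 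Thus $\wt\sigma$ is a Hodge isomorphism $\H^1(\wt{\shA}_{s_\IC}, \IQ) \sto \H^1(\wt{\shA}_{s'_\IC}, \IQ)$; as Hodge morphisms between $\H^1$'s of abelian varieties are algebraic, $\wt\sigma$ arises from an isogeny $\psi_\IC : \wt{\shA}_{s_\IC} \to \wt{\shA}_{s'_\IC}$, unique up to $\IQ^\times$.

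Descend $\psi_\IC$ to an isogeny $\psi_K$ over $K$ (after further finite extension) and extend over $W$ by the N\'eron mapping property; specializing to the special fibre gives an isogeny $\psi : \wt{\shA}_s \to \wt{\shA}_s$. Compatibility of $\psi$ with $f$ on $\bL_{*, t}$ for $* = \what{\IZ}^p, \cris$ then follows from smooth-proper base change when $\ell \neq p$ and from the Berthelot--Ogus comparison in the crystalline case, in conjunction with the compatibilities of the tensors $\wt{\bpi}_*$ recorded after Lem~\ref{lem: filtration}. That $\psi$ fixes $N_s$ is built into the construction. For uniqueness, any two such $\psi, \psi'$ differ by an element of $\CSpin(\wt{L})(\IQ)$ acting trivially on $\bL_{*, t}$ (by construction) and on $N_t$ (by hypothesis), hence on all of $\wt{\bL}_{*, t}$; such an element lies in the central $\IG_m \subset \CSpin$, yielding uniqueness up to $\IQ^\times$. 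The principal obstacle is the rationality of $\sigma$ with respect to the Betti $\IQ$-structure on $\wt{\bL}_B$, ensuring that its Clifford lift $\wt\sigma$ is $\IQ$-rational; this rests on the algebraicity of the twisted Mukai vector and the compatibilities between $\alpha_B, \alpha_\dR$, and $\alpha_\ell$.
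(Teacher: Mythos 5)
This is essentially the paper's proof: lift $f$ to characteristic zero by Thm~\ref{thm: lift}, extend the resulting rational Hodge isometry by the identity on $N$, produce a CSpin-isogeny between the complex Kuga--Satake abelian varieties at the lifted points of $\wt{\cS}(\wt{L})$, and specialize back. The only deviations are harmless: you prove the key lifting step inline (Cartan--Dieudonn\'e lift to $\CSpin(\wt{L})(\IQ)$, the filtration check via Lem.~\ref{lem: filtration}(a), and algebraicity of Hodge morphisms between abelian varieties) where the paper simply cites \cite[Lem.~5.2]{BY3}, you spell out the uniqueness claim which the paper leaves implicit, and your opening reduction ``by linearity to the primitive case'' is unnecessary (and not really a linearity argument) since Thm~\ref{thm: lift} applies to arbitrary derived $\ell$-isogenies.
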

\begin{proof}
By Thm~\ref{thm: lift}, we can find perfect liftings $X_{W}$ and $X'_W$ of $X$ such that $f$ lifts to an isogeny $f_K : \fh^2(X_K) \to \fh^2(X'_K)$ between the generic fibers. Choose an isomorphism $\bar{K} \iso \IC$. Let $t_W, t'_W$ be the liftings of $t$ given by $X_W, X'_W$ and $t_\IC, t_\IC'$ be $t_W \tensor \IC, t'_W \tensor \IC$. Then $f_K \tensor \IC$ gives a rational Hodge isometry $\bL_{B, t_\IC} \tensor \IQ \iso \bL_{B, t'_\IC} \tensor \IQ$. Recall that $N_{t_\IC}$ is canonically identified with $N_{t'_\IC}$, so that can extend $\bL_{B, t_\IC} \tensor \IQ \iso \bL_{B, t'_\IC} \tensor \IQ$ to a Hodge isometry $\wt{\bL}_{B, t_\IC} \tensor \IQ \iso \wt{\bL}_{B, t'_\IC} \tensor \IQ$. Since the map $\wt{\cS}(\wt{L}) \to \cS(L)$ is \'etale, the liftings $t_W, t'_W$ of $t$ can be lifted to liftings $s_W, s'_W$ of $s$ over $W$. Set $s_\IC, s'_\IC$ to be the $\IC$-fibers of $s_W, s'_W$. By \cite[Lem.~5.2]{BY3}, the rational Hodge isometry $\wt{\bL}_{B, t_\IC} \tensor \IQ \iso \wt{\bL}_{B, t'_\IC} \tensor \IQ$ can be lifted to a CSpin-isogeny $\wt{\shA}_{s_\IC} \to \wt{\shA}_{s'_\IC}$. We get the desired $\psi$ by specialzation. 
\end{proof}

\begin{lemma}
\label{lem: reduce to K3 isogeny}
For each CSpin-isogeny $\psi : \wt{\shA}_s \to \wt{\shA}_s$ which preserves $N_s \subset \LEnd(\wt{\shA}_s)$ there exists an isogeny $f : \fh^2(X) \to \fh^2(X')$ which preserves $\xi$ and $\epsilon$ such that $\psi$ and $f$ induce the same actions on $\bL_{*, t} \tensor \IQ$ for $* = \cris, \what{\IZ}^p$. 
\end{lemma}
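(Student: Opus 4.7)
The plan is to run the argument of Lemma~\ref{lem: lift to CSpin-isogeny} in reverse. Given $\psi$, I would first lift it to characteristic zero using CSpin-deformation theory, produce a derived isogeny of K3 surfaces in characteristic zero via Theorem~\ref{thm: ell Huy}, and then specialise the result back to $k$. Concretely, pick a perfect lifting $X_W$ of $X$ (available by \cite[Prop.~4.10]{LM} since $X$ has finite height) and the associated lift $s_W \in \wt{\cS}(\wt{L})(W)$ of $s$. The CSpin-isogeny $\psi$ will not in general extend to an endomorphism of $\wt{\shA}_{s_W}$, but the CSpin-analogue of Theorem~\ref{thm: lift} (essentially \cite[Lem.~5.2]{BY3}) supplies another lifting $s'_W$ of $s$ together with a CSpin-isogeny $\psi_W : \wt{\shA}_{s_W} \to \wt{\shA}_{s'_W}$ whose special fibre is $\psi$. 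Via the \'etaleness of $\rho : \wt{\shM}_{2d}^\circ \to \cS(L_d)$, the image of $s'_W$ in $\cS(L_d)$ has a canonical lift to $u'_W \in \wt{\shM}_{2d}^\circ(W)$ above $u$, yielding a polarised triple $(X'_W, \xi'_W, \epsilon'_W)$ over $W$.

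Next, fix an embedding $\bar K \into \IC$. Since $\psi_W$ preserves $N$, the induced rational Hodge isometry $\wt{\bL}_{B, s_\IC} \tensor \IQ \sto \wt{\bL}_{B, s'_\IC} \tensor \IQ$ restricts to a rational Hodge isometry $\bL_{B, t_\IC} \tensor \IQ \sto \bL_{B, t'_\IC} \tensor \IQ$, i.e.\ via $\alpha_B$, a rational Hodge isometry $\H^2(X_\IC, \IQ) \sto \H^2(X'_\IC, \IQ)$ matching the polarisation classes and level data. Theorem~\ref{thm: ell Huy} then realises this Hodge isometry by a derived isogeny $f_\IC : \fh^2(X_\IC) \sto \fh^2(X'_\IC)$.

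Finally, I would specialise $f_\IC$ to $k$. Decomposing $f_\IC$ into primitive pieces exhibits it as a composition of twisted Fourier--Mukai equivalences between twisted K3 surfaces whose Brauer classes are of prime-power order; by the specialisation direction of the twisted Fourier--Mukai formalism of \cite[Thm.~4.6]{BY3} each such equivalence descends to $k$ along its perfect lifting, and composing the specialisations yields a derived isogeny $f : \fh^2(X) \sto \fh^2(X')$, where $X'$ is the special fibre of $X'_W$. The Berthelot--Ogus isomorphism together with smooth and proper base change then forces $\w(f)$ and $\w(\psi)$ to agree on $\bL_{*, t} \tensor \IQ$ for $* = \cris, \what{\IZ}^p$, while preservation of $\xi$ and $\epsilon$ is built into the construction of $(X'_W, \xi'_W, \epsilon'_W)$ via $\wt{\shM}_{2d}^\circ$. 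The main obstacle is this last specialisation step: unlike an isogeny of abelian varieties, a derived isogeny is a composition of genuinely twisted Fourier--Mukai equivalences, so its specialisation requires controlling the integral models of the intermediate twisted K3 surfaces together with their $B$-field lifts, which is exactly what the gerby framework of \cite{BY3} makes available.
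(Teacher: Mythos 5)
Your overall shape (lift $\psi$ to characteristic zero, apply Thm~\ref{thm: ell Huy} there, specialize back) matches the paper, but the crucial lifting step is where your argument has a genuine gap. You fix a perfect lifting $X_W$ over $W$ and assert that a ``CSpin-analogue of Theorem~\ref{thm: lift}, essentially \cite[Lem.~5.2]{BY3}'' produces another lifting $s'_W$ of the \emph{same} point $s$ over $W$ together with a CSpin-isogeny $\wt{\shA}_{s_W} \to \wt{\shA}_{s'_W}$ reducing to $\psi$. No such statement is available: \cite[Lem.~5.2]{BY3} is a characteristic-zero statement (it lifts a rational Hodge isometry between the $\wt{\bL}_B$'s to a CSpin-isogeny of complex Kuga--Satake abelian varieties, and is used that way in Lem.~\ref{lem: lift to CSpin-isogeny}), and Theorem~\ref{thm: lift} concerns twisted Fourier--Mukai kernels on K3 surfaces, not quasi-isogenies of abelian varieties. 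The obstruction is real: by Grothendieck--Messing, lifting the quasi-isogeny $\psi$ requires the filtration transported by $\psi_{\cris}$ from $\Fil^1\H^1_{\dR}(\wt{\shA}_{s_W})$ to reduce modulo $p$ to the Hodge filtration of $\wt{\shA}_s$ (and to respect the integral/tensor structure of the integral model), which fails for an arbitrary choice of $X_W$; in general no suitable lifting exists over the \emph{unramified} ring $W$ at all. The paper's proof handles exactly this point in the opposite order: using \cite[Lem.~4.5]{Yang} (this is where finite height enters) it chooses a lifting $X_V$ over a possibly ramified finite extension $V$ of $W$ whose Hodge filtration is \emph{preserved} by the crystalline realization of $\psi$, then uses Lem.~\ref{lem: filtration}(a) ($\Fil^1\H^1_{\dR} = \ker(x)$ and $\Fil^1\bL_{\dR} = \Fil^1\wt{\bL}_{\dR}$) to conclude that $\psi$ itself lifts, up to a $p$-power multiple, to a \emph{self} CSpin-isogeny $\psi_K$ of $\shA_{s_K}$; only then does one pass to $\IC$ and invoke Thm~\ref{thm: ell Huy}. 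Without an argument replacing \cite[Lem.~4.5]{Yang}, your step from $\psi$ to a characteristic-zero CSpin-isogeny is unsupported.

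Your treatment of the final step is also off target, though less seriously. What must be specialized is only an isogeny $\fh^2(X_K\tensor\IC)\sto\fh^2(X_K\tensor\IC)$, i.e.\ an algebraic correspondence; the conclusion of the lemma does not require the specialization to be a \emph{derived} isogeny, so ordinary specialization of cycle classes over the discrete valuation ring suffices, exactly as in the paper (``specialize this isogeny to $X$''). Your proposed route -- decomposing $f_\IC$ into primitive twisted Fourier--Mukai pieces and specializing each via \cite[Thm~4.6]{BY3} -- would not work as stated anyway: that theorem lifts kernels from characteristic $p$ to characteristic $0$, it does not specialize them, and the intermediate twisted K3 surfaces appearing in the decomposition of $f_\IC$ need not have good reduction or controlled integral models. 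Dropping this complication and replacing your lifting step by the adapted-lifting argument of \cite[Lem.~4.5]{Yang} together with Lem.~\ref{lem: filtration}(a) would bring your proof in line with the paper's.
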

\begin{proof}
This follows from a slight refinement of the argument as \cite[Prop.~5.2]{Yang}\footnote{There is a typo in \textit{loc. cit.} $p \ge 13$ should be $p \ge 5$.}. For reader's convenience we sketch the argument: First, note that $\psi$ induces an isomorphism $\phi \in \O(\H^2_\cris(X/W)[1/p])$ which preserves the class of $\xi$. By \cite[Lem.~4.5]{Yang}, there exists a finite flat extension $V$ of $W$, and a lifting $X_V$ of $X$ such that $\Pic(X_V) \to \Pic(X)$ is an isomorphism, and the automorphism of $\H^2_\dR(X_V[1/p])$ induced by $\phi$ via the Berthelot-Ogus isomorphism preserves the Hodge filtration. This $X_V$ gives rise to a $V$-valued point $u_V$ which lifts $u$. Set $t_V = \rho'(u_V)$. Since the map $\wt{\cS}(L_d) \to \cS(L_d)$ are \'etale, $u_V$ induces liftings $t_V$ and $s_V$ of $t$ and $s$ over $V$. Next, set $K := V[1/p]$ and recall that $\Fil^1 \H^1_\dR(\shA_{s_K}) = \ker(x)$ for any generator $x$ of the one-dimensional subspace $\Fil^1 \wt{\bL}_{\dR, t_K}$. By Hodge theory, the cycle classes of $N$ in $\wt{\bL}_{\dR, t_K}$ cannot lie in $\Fil^1 \wt{\bL}_{\dR, t_K}$, so we have $\Fil^1 \bL_{\dR, t_K} = \Fil^1 \wt{\bL}_{\dR, t_K}$ under the inclusion $\bL_{\dR, t_K} \subseteq \wt{\bL}_{\dR, t_K}$. By our construction of $s_V$, $\psi$ preserves $\Fil^1 \H^1_\dR(\shA_{s_K})$ via the Berthelot-Ogus isomorphism. This implies that, up to replacing $\psi$ by a $p$-power multiple, $\psi$ lifts to a CSpin-isogeny $\psi_K : \shA_{s_K} \to \shA_{s_K}$, which necessarily preserves $N \subset \LEnd(\shA_{s_K})$. Now choose an isomorphism $\bar{K} \iso \IC$. Then $\psi_K \tensor \IC$ induces an isometry in $\O(\H^2(X_{s_K}(\IC), \IQ))$ which preserves $\xi_K(\IC)$ and the Hodge structure. Finally, apply Thm~\ref{thm: ell Huy} to obtain an auto-isogeny of $X_K \tensor \IC$. Specialize this isogeny to $X$ and we are done. 
\end{proof}

\noindent \textit{Proofs of Thm~\ref{thm: main} and~\ref{thm: group main}.}
Consider group scheme $\wt{I}$ defined by
\begin{align*}
\wt{I}(R) := \{ \psi \in (\End(\wt{\shA}_s) \tensor R)^\times :\, & \psi \text{ respects the $\IZ/2\IZ$-grading $\Cl(\wt{L})$-action and $\pi_* \tensor_\IZ R$ for $* = \cris, \what{\IZ}^p$} \} 
\end{align*}
for each $\IQ$-algebra $R$. Set $I' := \wt{I}/ \IG_m$. It follows from Lem.~\ref{lem: filtration}(b) and the positivity of Rosati involution that $I'(\IR)$ is compact, so that $I'$ is reductive. Note that $I' \tensor \IQ_\ell$ admits a natural morphism into the centralizer of $F$ in $\SO(\bL_{\bar{s}, \ell}) \stackrel{\alpha_\ell}{=} \SO(\P^2_\et(X, \IQ_\ell(1)))$. Let $\ft$ denote $\ft(X)$ and let $I_\ft$ be as defined in \S~\ref{sec: finiteness and Tate}. Since $I'(\IQ) = \wt{I}(\IQ)/ \IG_m(\IQ)$ by Hilbert theorem 90, Lem.~\ref{lem: lift to CSpin-isogeny} and~\ref{lem: reduce to K3 isogeny} give us maps $S_0^{\der, \ell} \to I'(\IQ) \to I_{\ft}(\IQ)$ which are compatible with the morphisms to $I_{\ft}(\IQ_\ell)$. It is easy to check that $p_\ft(S_0^{\der, \ell}) = S_\ft^{\der, \ell}$ is surjective, so there exists at least a set-theoretic section $S_\ft^{\der, \ell} \to S_0^{\der, \ell}$. Then we conclude by Cor.~\ref{cor: conditional}. \qed

\printbibliography

\end{document}